\newcommand{\initialsspace}{0.1em}
\newcommand{\splitlist}[1]{\@splitlist#1\@nil}
\def\@splitlist#1\@nil{%
  \if\relax\detokenize{#1}\relax
    \expandafter\@gobble
  \else
    \expandafter\@firstofone
  \fi
  {\@spl@tlist#1.\@nil}%
}
\def\@spl@tlist#1.#2\@nil{%
    %Everything we want to compare must be in a macro
    \def\tmpA{#1}%
    \def\tmpB{#2}%
    \def\tmpP{.}%
    %Is #2 just a period? If so this will be the last period of the string, and should not preceed a non-breaking space
    \ifx\tmpB\tmpP%
        #1.%
    \else{%
        %Is #2 empty? If so we're already done, so skip along
        \ifx\tmpA\@empty%  
        \else%
                #1.\nobreak\hspace{\initialsspace}%
        \fi%
    }% 
    \fi%
  \if\relax\detokenize{#2}\relax
    \expandafter\@firstoftwo
  \else
    \expandafter\@secondoftwo
  \fi
  {\unskip}%
  {\@spl@tlist#2\@nil}%
}
\newcommand{\initials}[1]{\splitlist{#1}}
\definecolor{dark-blue}{rgb}{0.15,0.15,0.4}
\numberwithin{equation}{section}
\declaretheorem[style=plain,numberlike=equation]{theorem}
\declaretheorem[style=plain,numberlike=theorem]{lemma}
\declaretheorem[style=plain,numberlike=theorem]{proposition}
\declaretheorem[style=plain,numberlike=theorem]{corollary}
\declaretheorem[style=remark,numberlike=theorem]{remark}
\declaretheorem[style=definition,numberlike=theorem]{definition}
\declaretheorem[name=Example,style=definition,numbered=no]{example*}
\declaretheoremstyle[
    spaceabove=6pt,
    spacebelow=6pt,
    notebraces={}{},
    notefont=\bfseries,
    headformat=\let\thmt@space\@empty\NOTE,
    bodyfont=\itshape,
]{namedStyle}
\newcommand{\prelistcommand}{\nobreak\leavevmode\@nobreaktrue}
\setlist{smallin, topsep=1pt}
\NewDocumentCommand\set{s m}{%
    \IfBooleanTF#1%
    {\left\{ #2 \right\}}%
    {\{#2\}}%
}
\newcommand{\multisetgap}{4mu}
\NewDocumentCommand\multiset{s m}{%
    \IfBooleanTF#1%
    {\left\{\mkern-\multisetgap\left\{ #2 \right\}\mkern-\multisetgap\right\}}%
    {\{\mkern-\multisetgap\{#2\}\mkern-\multisetgap\}}%
}
\NewDocumentCommand\setbuild{s m m}{%
    \IfBooleanTF#1%
    {\ensuremath{\left\{\, #2 \, \middle| \, #3 \,\right\}}}%
    {\ensuremath{\{\, #2 \, \mid \, #3 \,\}}}%
}
\NewDocumentCommand\spangle{s m m m}{%
    \IfBooleanTF#1%
    {\ensuremath{\left\langle\, #2 \, \middle| \, #3 \,\right\rangle_{#4}}}%
    {\ensuremath{\langle\, #2 \, \mid \, #3 \,\rangle_{#4}}}%
}
\DeclarePairedDelimiter{\ceil}{\lceil}{\rceil}
\DeclarePairedDelimiter{\floor}{\lfloor}{\rfloor}
\DeclarePairedDelimiter{\paren}{(}{)}
\DeclarePairedDelimiter{\abs}{\lvert}{\rvert}
\DeclarePairedDelimiter{\sqbra}{[}{]}
\DeclarePairedDelimiter{\angl}{\langle}{\rangle}
\let\oldceil\ceil
\def\ceil{\@ifstar{\oldceil}{\oldceil*}}
\let\oldfloor\floor
\def\floor{\@ifstar{\oldfloor}{\oldfloor*}}
\let\oldparen\paren
\def\paren{\@ifstar{\oldparen}{\oldparen*}}
\let\oldabs\abs
\def\abs{\@ifstar{\oldabs}{\oldabs*}}
\let\oldsqbra\sqbra
\def\sqbra{\@ifstar{\oldsqbra}{\oldsqbra*}}
\let\oldangl\angl
\def\angl{\@ifstar{\oldangl}{\oldangl*}}
\newcommand{\Z}{\mathbb{Z}}
\newcommand{\Q}{\mathbb{Q}}
\newcommand{\C}{\mathbb{C}}
\DeclareMathOperator{\stab}{stab}
\newcommand{\Ccl}{\mathcal{C}}
\newcommand{\upto}[1]{\mathord{\uparrow}^{#1}} % induction
\newcommand{\downto}[1]{\mathord{\downarrow}_{#1}} % restriction
\DeclareMathOperator{\diag}{diag}
\DeclareMathOperator{\PGL}{PGL}           % Projective linear group
\let\@@pmod\pmod
\DeclareRobustCommand{\pmod}{\@ifstar\@pmods\@@pmod}
\def\@pmods#1{\mkern4mu({\operator@font mod}\mkern 6mu#1)}
\DeclareMathOperator{\supp}{supp}       % support
\newcommand{\support}[1]{\supp(#1)}       % support
\newcommand{\blank}{{-}}    % Placeholder for arguments in a function
\renewcommand{\epsilon}{\varepsilon}
\renewcommand{\phi}{\varphi}
\renewcommand{\leq}{\leqslant}
\renewcommand{\geq}{\geqslant}
\renewcommand{\emptyset}{\varnothing}
\let\oldr@@t\r@@t
\def\r@@t#1#2{%
\setbox0=\hbox{$\oldr@@t#1{#2\,}$}\dimen0=\ht0
\advance\dimen0-0.2\ht0
\setbox2=\hbox{\vrule height\ht0 depth -\dimen0}%
{\box0\lower0.4pt\box2}}
\LetLtxMacro{\oldsqrt}{\sqrt}
\renewcommand*{\sqrt}[2][\ ]{\oldsqrt[#1]{#2}}
\def\subsection{\@startsection{subsection}{2}%
  \z@{.6\linespacing \@minus 0.1\linespacing}{.3\linespacing}%
  {\normalfont\bfseries}}
\newcommand{\vsmallskipamount}{3pt plus 0.5pt minus 0.5pt}
\newcommand{\vvsmallskipamount}{0.75pt plus 0.25pt minus 0.5pt}
\newcommand{\proofpart}[1]{%
  \par
  \addvspace{\vsmallskipamount}%
    % \addvspace{3pt}%
  \noindent\emph{#1}\par\nobreak
  \addvspace{\vvsmallskipamount}%
  \@afterheading
}
\newcommand*\wt[1]{\mathpalette\wthelper{#1}}
\newcommand*\wthelper[2]{%
        \hbox{\dimen@\accentfontxheight#1%
                \accentfontxheight#11.2\dimen@ % change the multiplier after #1 before \dimen to adjust. larger multiplier means lower tilde
                $\m@th#1\widetilde{#2}$%
                \accentfontxheight#1\dimen@
        }%
}
\newcommand*\accentfontxheight[1]{%
        \fontdimen5\ifx#1\displaystyle
                \textfont
        \else\ifx#1\textstyle
                \textfont
        \else\ifx#1\scriptstyle
                \scriptfont
        \else
                \scriptscriptfont
        \fi\fi\fi3
}
\newlength{\truelen}
\newcommand{\padbox}[3][c]{%
    \settowidth{\truelen}{\ensuremath{#2}}%
    \ifdim\truelen < #3%
        \makebox[#3][#1]{\ensuremath{#2}}%
    \else%
        \ensuremath{#2}%
    \fi%
}
\newlength{\minlen}
\newcommand{\flexbox}[3][l]{%
    \settowidth{\minlen}{\ensuremath{#3}}%
    \padbox[#1]{#2}{\minlen}%
}
\newcommand{\plus}{{+}}
\newcommand{\minus}{{-}}
\newcommand{\uno}{{(1)}}
\newcommand{\zed}{{(z)}}
\newcommand{\la}{\lambda}
\newcommand{\cover}[1]{\wt{#1}}
\newcommand{\coverCcl}{\cover{\mbox{\(\Ccl\)}}}
\newcommand{\projmap}{\theta}
\newcommand{\negchar}[1]{\langle #1 \rangle}
\newcommand{\ass}{{\mathsf{a}}}
\newcommand{\el}{l}
\title[Characters determined by values on \(\MakeLowercase{\el}'\)-classes]
{Characters and spin characters of alternating and symmetric groups determined by values on \(\el'\)-classes}
\author[Eoghan McDowell]{Eoghan McDowell}
\def\@setfoot@addresses{%
    \ifx\@affiliation\@empty\else
    \affiliationname\ \@affiliation \@addpunct.%
    \fi
    \ifx\web@page\@empty\else
    
    \webpagename\ \web@page \@addpunct.%
    \fi
}
\def\affiliationname{\textit{Affiliation}:}
\def\affiliation#1{\gdef\@affiliation{#1}}
\let\@affiliation\@empty
\def\webpagename{\textit{Webpage}:}
\def\webpage#1{\gdef\web@page{#1}}
\let\web@page\@empty
\affiliation{Okinawa Institute of Science and Technology}
\email{eoghan.mcdowell@oist.jp}
\def\@setsubjclass{%
    \vspace{4pt}

    \vspace{-\baselineskip}
    {\itshape\subjclassname.}\enspace\@subjclass\@addpunct.%
}
\subjclass[2020]{%
% MSC2020
20C15, % ordinary representations and characters
20C20, % modular representations and characters
% 20C25, % projective representations and multipliers
20C30. % representations of finite symmetric groups
% 05E10. % combinatorial aspects of representation theory
}
\keywords{Characters, projective representations, alternating group, symmetric group, decomposition numbers}
\def\@setthanks{%
\vspace{-\baselineskip}\vspace{4pt}
\def\thanks##1{\@par##1\@addpunct.}
\thankses%
}
\def\journalinfo#1{\thanks{#1}}
\begin{document}

\begin{abstract}
This paper identifies all pairs of ordinary irreducible characters of the alternating group which agree on conjugacy classes of elements of order not divisible by a fixed integer \(\el\), for \(\el \neq 3\).
We do likewise for spin characters of the symmetric and alternating groups.
We find that the only such characters are the conjugate or associate pairs labelled by partitions with a certain parameter divisible by \(\el\).
When \(\el\) is prime, this implies that the rows of the \(\el\)-modular decomposition matrix are distinct except for the rows labelled by these pairs.
When \(\el=3\) we exhibit many additional examples of such pairs of characters.
\end{abstract}

\vspace*{14pt}
\vspace{-0.47cm}
\vspace{-0.37cm}

\maketitle

\vspace{-0.1cm}
\vspace{-0.79cm}

\section{Introduction}

Does knowing its values on \(\el'\)-classes suffice to identify an ordinary irreducible character of a group \(G\)?
Here, an \(\el'\)-class means a conjugacy class of elements of order not divisible by a positive integer \(\el \geq 2\).
The answer is certainly ``no'' if \(\el\) is prime and \(G\) is an \(\el\)-group, since the only \(\el'\)-class is the identity;
on the other hand, if \(G\) is a symmetric group and \(\el > 2\) then the answer is ``yes'': Wildon showed that all irreducible characters of the symmetric group are uniquely determined by their values on \(\el'\)-classes when \(\el > 2\) \cite[Corollary~2.1.3]{wildon2008distinctrows}.

In this paper we resolve this question for the alternating group, and for the spin characters of the symmetric and alternating groups, when \(\el \neq 3\).

When \(\el\) is prime, answering this question gives us information about the modular representation theory of \(G\).
Indeed, for ordinary irreducible characters \(\chi\) and \(\psi\) of a group \(G\), is is straightforward to show (see the proof of \Cref{cor:distinct_rows}) that the following are equivalent:
\begin{enumerate}[(i)]
    \item
\(\chi\) and \(\psi\) agree on the \(\el'\)-classes of \(G\);
    \item
the rows of the \(\el\)-modular decomposition matrix of \(G\) labelled by \(\chi\) and \(\psi\) are equal.
\end{enumerate}
This second property says that the \(\el\)-modular reductions of the representations affording \(\chi\) and \(\psi\) have the same multiset of composition factors (that is, the same Brauer character; for an account of decomposition numbers and Brauer characters, see for example \cite[Chapter~10]{webb2016finitegroupreptheory}).
Thus we obtain as a corollary to our main theorems a classification of repeated rows in the decomposition matrices of the alternating group when \(\el \neq 3\), and of the double covers of the symmetric and alternating groups when \(\el \not\in \set{2,3}\).

\subsection{Main theorems}

\newcommand{\spin}{spin }

Our first main theorem concerns the alternating group \(A_n\);
see \Cref{subsection:chars_of_S_n_and_A_n} for the labelling of its characters.
The case of \(l=2\) is due to \cite[Theorem~3.2.1]{wildon2008distinctrows}.

\begin{theorem}
\label{thm:maintheorem:A_n}
Let \(\el \neq 3\).

If \(\el\) is even,
irreducible characters of \(A_n\) are uniquely determined by their values on \(\el'\)-classes.

If \(\el\) is odd,
a conjugate pair of characters of \(A_n\) labelled by a self-conjugate partition with a principal hook length divisible by \(\el\) agree on \(\el'\)-classes; all other irreducible characters of \(A_n\) are uniquely determined by their values on \(\el'\)-classes.
\end{theorem}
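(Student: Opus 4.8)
The plan is to push every comparison down to the symmetric group and invoke (a parity-refinement of) Wildon's theorem. First recall that \(\mathrm{Irr}(A_n)\) consists of the restrictions \(\chi^{\lambda}|_{A_n}\) for non-self-conjugate \(\lambda\) (with \(\lambda,\lambda'\) giving the same character) together with the conjugate pairs \(\chi^{\lambda}_{\pm}\) for self-conjugate \(\lambda\), and that \(\chi^{\lambda}_{+}-\chi^{\lambda}_{-}\) is supported on exactly the two \(A_n\)-classes into which the \(S_n\)-class of cycle type \((h_1,\dots,h_d)\) splits, where \(h_1>\dots>h_d\) are the principal hook lengths of \(\lambda\). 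These are distinct odd integers, so such elements have odd order \(\mathrm{lcm}(h_1,\dots,h_d)\). This disposes of the same-\(\lambda\) comparison at once: the pair agrees on the \(\el'\)-classes exactly when \(\el\mid\mathrm{lcm}(h_1,\dots,h_d)\). For even \(\el\) this order is odd and so never divisible by \(\el\), giving separation; for odd \(\el\) it is the divisibility recorded in the statement (when \(\el\) is a prime power, and in particular when \(\el\) is prime, \(\el\mid\mathrm{lcm}(h_1,\dots,h_d)\) is equivalent to some principal hook length being divisible by \(\el\)).

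Two elementary facts drive the reduction of the remaining, cross-type comparisons. Since \(\chi^{\lambda'}=\mathrm{sgn}\cdot\chi^{\lambda}\), the sum \(\chi^{\lambda}+\chi^{\lambda'}\) vanishes on every odd permutation and equals \(2\chi^{\lambda}\) on every even permutation; in particular a self-conjugate \(\chi^{\mu}\) vanishes on all odd permutations. Moreover \(\chi^{\lambda}|_{A_n}\), being the restriction of a class function on \(S_n\), takes \emph{equal} values on the two halves of any split class, whereas \(\chi^{\mu}_{\pm}\) takes \emph{distinct} values on the two halves of its own hook class. This last mismatch kills every coincidence \(\chi^{\lambda}|_{A_n}=\chi^{\mu}_{\pm}\) and \(\chi^{\lambda}_{\pm}=\chi^{\mu}_{\pm}\) (for distinct self-conjugate \(\lambda,\mu\), whose hook classes differ) as soon as one of the relevant hook classes is itself an \(\el'\)-class. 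In the complementary cases the spikes lie off the \(\el'\)-classes, so on the \(\el'\)-classes each character is simply half of its symmetric-group origin; using the two facts above, agreement on the \emph{even} \(\el'\)-classes then upgrades to an identity on \emph{all} \(\el'\)-classes of \(S_n\) between symmetrized characters — either \(\chi^{\lambda}+\chi^{\lambda'}=\chi^{\mu}+\chi^{\mu'}\), or \(\chi^{\lambda}+\chi^{\lambda'}=\chi^{\mu}\) with \(\mu\) self-conjugate, or (in the last remaining self-conjugate versus self-conjugate case) \(\chi^{\lambda}=\chi^{\mu}\), which is excluded directly by Wildon's theorem.

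The crux, and the step I expect to be the main obstacle, is thus a parity-refined strengthening of Wildon's result valid for \(\el\neq3\): distinct conjugation-pairs \(\{\lambda,\lambda'\}\) must be separated by an \emph{even} \(\el'\)-class, and no symmetrized sum \(\chi^{\lambda}+\chi^{\lambda'}\) can agree with a self-conjugate \(\chi^{\mu}\) on all \(\el'\)-classes. The difficulty is that the quoted form of Wildon's theorem provides only \emph{some} distinguishing \(\el'\)-class, which may be an odd permutation; and because replacing \(\mu\) by \(\mu'\) does not change the \(A_n\)-character, an odd distinguishing class genuinely carries no information here. I would therefore re-run the separation tracking parity, via the Murnaghan--Nakayama rule and the \(\el\)-core/\(\el\)-quotient parametrisation: choose for \(\lambda\) an \(\el'\)-cycle type whose number of even-length parts pins the parity even, and extract a triangular leading term (with respect to the dominance order) that separates the symmetrized characters and forbids the sum-equals-single relation. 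The exclusion \(\el\neq3\) should enter precisely at this leading-term analysis, where at \(\el=3\) the triangular system degenerates and produces exactly the additional coincidences advertised in the abstract.

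Finally I would assemble the two regimes. For even \(\el\) (the case \(\el=2\) being the cited result of Wildon), the same-\(\lambda\) pairs are separated because their odd-order hook classes are \(\el'\)-classes, while the split-halves argument and the key lemma rule out all cross-type coincidences; hence every irreducible character of \(A_n\) is determined by its \(\el'\)-values. For odd \(\el\neq3\) the only surviving coincidences are the same-\(\lambda\) pairs whose hook class has order divisible by \(\el\) — that is, the conjugate pairs labelled by self-conjugate partitions with a principal hook length divisible by \(\el\) — which is the assertion of the theorem.
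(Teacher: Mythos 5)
Your reduction is correct and in substance parallels the paper's own framework: agreement of two irreducible \(A_n\)-characters on even \(\el'\)-classes is equivalent to equality of their inductions to \(S_n\) on \(\el'\)-elements, and since those inductions are exactly the symmetrized sums \(\chi^\lambda+\chi^{\lambda'}\) (or \(\chi^\mu\) for \(\mu\) self-conjugate), everything comes down to your two statements: (a) distinct pairs \(\set{\lambda,\lambda'}\neq\set{\mu,\mu'}\) give symmetrized sums that differ on some \(\el'\)-class, and (b) no symmetrized sum agrees with a self-conjugate \(\chi^\mu\) on all \(\el'\)-classes. (This is the content of \Cref{lemma:char_value_on_split_class,lemma:equivalence_to_G-conjugacy,prop:distinguishing_up_to_conjugacy}.) Your treatment of the conjugate pairs labelled by a single self-conjugate partition is also fine; indeed your remark about lcm-divisibility versus part-divisibility for non-prime-power \(\el\) is a point of care.

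The gap is the crux itself, which you name but do not prove. You correctly observe that Wildon's theorem separates individual characters \(\chi^\lambda\neq\chi^\mu\) on \(\el'\)-classes but says nothing about the symmetrized sums, so a genuinely new separation statement is required. Your proposed route --- ``re-run the separation tracking parity, via the Murnaghan--Nakayama rule and the \(\el\)-core/\(\el\)-quotient parametrisation \ldots\ extract a triangular leading term'' --- is a plan, not an argument: no cycle types are specified, no triangularity is established, and nothing identifies where the hypothesis \(\el\neq 3\) does actual work. It must do work, because \Cref{thm:3-indistinguishable} exhibits equalities \(\chi^\lambda+\chi^{\lambda'}=\chi^\mu\) and \(\chi^\lambda+\chi^{\lambda'}=\chi^\mu+\chi^{\mu'}\) on \(3'\)-classes --- precisely your statements (a) and (b) failing at \(\el=3\) --- so any unexamined ``leading term'' argument is as likely to be false as true until that boundary is located. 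This missing step is exactly where the paper's technical core lies: \Cref{thm:generation_theorem} proves, by a multi-case induction on support using class-sum multiplication (\Cref{lemma:summand_of_greatest_support,lemma:multiplying_cycle_with_partition}), that for \(\el\geq 4\) the even \(S_n\)-conjugacy \(\el'\)-class sums generate in \(Z(\Q A_n)\) all even \(S_n\)-class sums, and \Cref{lemma:central_character_argument} (central characters) converts that generation into the separation you need. Without a worked-out substitute for this --- either the generation theorem or a genuine Murnaghan--Nakayama argument in which \(\el\geq4\) is visibly used --- the proposal does not constitute a proof.
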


Our next main theorems concern \emph{spin characters}.
The projective representation theory of the symmetric and alternating groups \(S_n\) and \(A_n\) is controlled by the linear representation theory of their double covers \(\cover{S}_n\) and \(\cover{A}_n\);
characters of a double cover which are not lifts from the original group are those which send the central element of order \(2\) to \(-I\), and are called \emph{\spin characters} (or \emph{projective characters} or \emph{negative characters}).
Note that there is a choice of two double covers of \(S_n\), but our results hold for either choice; see \Cref{subsection:chars_of_double_covers} for a justification of this, as well as for the explicit presentation we will use and the labelling of its characters.

\begin{theorem}
\label{thm:maintheorem:double_cover_of_S_n}
Let \(\el \neq 3\).
An associate pair of \spin characters of \(\cover{S}_n\) labelled by an odd partition with a part divisible by \(\el\) agree on \(\el'\)-classes;
all other irreducible \spin characters of \(\cover{S}_n\) are uniquely determined by their values on \(\el'\)-classes.
\end{theorem}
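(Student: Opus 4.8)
The plan is to read off the behaviour on \(\el'\)-classes from the two qualitatively different parts of the spin character table: the values on classes of odd cycle type, which are common to both members of an associate pair and are encoded by Schur \(Q\)-functions, and the ``extra'' values distinguishing the members of a pair. Recall that the spin characters of \(\cover{S}_n\) are labelled by strict partitions \(\lambda \vdash n\) with, say, \(k\) parts; \(\lambda\) is odd (and so labels an associate pair \(\langle\lambda\rangle^{\pm}\), with \(\langle\lambda\rangle^{-} = \langle\lambda\rangle^{+}\otimes\operatorname{sgn}\)) exactly when \(n-k\) is odd, and otherwise labels a single self-associate character. A spin character vanishes on any class whose preimage in \(\cover{S}_n\) does not split, and on the split classes of odd cycle type its value is the coefficient of the corresponding odd power sum in \(Q_\lambda\); since \(\operatorname{sgn}\) is trivial on even permutations, both members of a pair agree there. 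As \(\langle\lambda\rangle^{+}-\langle\lambda\rangle^{-} = \langle\lambda\rangle^{+}(1-\operatorname{sgn})\) is supported on odd permutations, Schur's evaluation of the remaining values shows this difference is supported precisely on the classes lying over the cycle type \(\lambda\).

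I would first settle the associate pairs. A lift in \(\cover{S}_n\) of an element of cycle type \(\lambda\) has order a multiple of the order of its image, so if some part \(\lambda_i\) is divisible by \(\el\) then the classes over \(\lambda\) are not \(\el'\)-classes; as the two members agree off these classes, they agree on all \(\el'\)-classes, giving the asserted family of coincidences. For the converse I would compute the order of such a lift explicitly, the one place where the twisting relations governing lifts of disjoint cycles enter, and verify that it is divisible by \(\el\) only when a part of \(\lambda\) is; this shows these pairs are the only ones that agree on \(\el'\)-classes.

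It then remains to separate two spin characters with distinct labels \(\lambda \neq \mu\). Since any such characters already differ on a class of odd cycle type, it suffices to find one that is moreover an \(\el'\)-class. Identifying which odd cycle types support an \(\el'\)-class (again via the order of a lift) reduces this to showing that \(Q_\lambda\) and \(Q_\mu\) have distinct images under the ring homomorphism \(\pi\) sending \(p_r \mapsto p_r\) for \(\el \nmid r\) and \(p_r \mapsto 0\) for \(\el \mid r\). When \(\el\) is even no odd \(r\) is divisible by \(\el\), so \(\pi\) fixes every odd power sum and \(\pi(Q_\lambda) = Q_\lambda\) already determines \(\lambda\); the content is the case of odd \(\el\), which I would treat by induction on \(n\), stripping a bar of a carefully chosen odd length \(r\) with \(\el \nmid r\) via the spin Murnaghan--Nakayama rule and tracking the resulting configurations on the \(\bar{\el}\)-abacus.

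The main obstacle is exactly this injectivity of \(\lambda \mapsto \pi(Q_\lambda)\) for odd \(\el \neq 3\): because \(\pi\) discards every bar of length divisible by \(\el\), the inductive step must always locate a removable bar of odd length coprime to the forbidden residues whenever \(\lambda \neq \mu\). For \(\el \geq 5\) the lengths \(1\) and \(3\) both remain available, which I expect to give enough room to detect the first position at which the \(\bar{\el}\)-abacus displays of \(\lambda\) and \(\mu\) differ; for \(\el = 3\) precisely these short odd bars are lost, and this is simultaneously why the argument breaks down and why additional coinciding characters appear. Carrying out this bar-by-bar construction of a separating \(\el'\)-class — equivalently, ruling out any surviving linear relation among the \(Q_\lambda\) modulo the ideal generated by the power sums \(p_r\) with \(\el \mid r\) — is where essentially all of the combinatorial work will lie.
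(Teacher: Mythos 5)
Your route is genuinely different from the paper's: you translate values on \(\el'\)-classes into images of Schur \(Q\)-functions under the ring map \(\pi\) killing \(p_r\) for \(\el \mid r\), and aim to prove injectivity of \(\la \mapsto \pi(Q_\la)\) by bar-stripping. The paper never touches symmetric functions: it proves (\Cref{thm:double_cover_of_S_n_generation_theorem}) that the even \(\cover{S}_n\)-conjugacy \(\el'\)-class sums generate, as a subalgebra of \(Z(\Q\cover{A}_n)\), \emph{all} even \(\cover{S}_n\)-class sums --- by induction on support with explicit class-sum products, plus \Cref{lemma:split_ccl_product} to handle split classes --- and then the central-character argument (\Cref{lemma:central_character_argument}) turns agreement on \(\el'\)-classes into agreement on all even classes, i.e.\ equal restriction to \(\cover{A}_n\), i.e.\ an associate pair. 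The problem with your proposal is that it stops exactly where the real work begins. For odd \(\el \geq 5\) the injectivity of \(\la \mapsto \pi(Q_\la)\) is not a quotable fact or a routine verification: it is (for prime-power \(\el\)) equivalent to the theorem itself, and you explicitly defer it (``where essentially all of the combinatorial work will lie'', ``I expect to give enough room''). Nor is the proposed induction obviously workable: the spin Murnaghan--Nakayama rule yields a \emph{signed sum} over all removable \(r\)-bars, so comparing \(\la\) and \(\mu\) after stripping requires controlling cancellation, and a strict partition need not admit a removable bar of length \(1\) or \(3\) at the position where the two abacus displays first differ. What you have is a plausible strategy whose technical core --- the counterpart of the paper's generation theorem --- is missing.

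There is also a flaw in the reduction itself. An \(\el'\)-class is one whose elements have order coprime to \(\el\); for a class of all-odd cycle type \(\rho\) and odd \(\el\) this means \(\el \nmid \operatorname{lcm}(\rho)\), whereas \(\pi\) retains the coefficient of \(p_\rho\) whenever no \emph{single part} of \(\rho\) is divisible by \(\el\). These conditions coincide only when \(\el\) is a prime power: for \(\el = 15\), the class of cycle type \((5,3,1,\ldots)\) is not a \(15'\)-class, yet \(\pi\) keeps \(p_{(5,3,1,\ldots)}\). So \(\pi(Q_\la) \neq \pi(Q_\mu)\) need not furnish a distinguishing \(\el'\)-class, and the same conflation invalidates your converse step for associate pairs (``order divisible by \(\el\) only when a part of \(\la\) is'' fails for \(\el=15\), \(\la=(5,3,2)\)). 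As written, your argument covers at best prime-power \(\el\); the paper's generation theorem and central-character machinery, by contrast, are formulated and proven with the correct condition (divisibility of the lcm), with any prime-power subtlety confined to the final identification of which associate pairs coincide rather than entering at the foundational reduction step, as it does in your approach.
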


\begin{theorem}
\label{thm:maintheorem:double_cover_of_A_n}
Let \(\el \neq 3\).
A conjugate pair of \spin characters of \(\cover{A}_n\) labelled by an even partition with a part divisible by \(\el\) agree on \(\el'\)-classes;
all other irreducible \spin characters of \(\cover{A}_n\) are uniquely determined by their values on \(\el'\)-classes.
\end{theorem}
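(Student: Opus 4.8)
The plan is to descend from \(\cover{S}_n\) to \(\cover{A}_n\) by Clifford theory, taking \Cref{thm:maintheorem:double_cover_of_S_n} as the input. Recall the branching: an even partition \(\lambda\) labels a self-associate \spin character \(\negchar{\lambda}\) of \(\cover{S}_n\), which restricts to a conjugate pair \(\negchar{\lambda}^{+},\negchar{\lambda}^{-}\) of \(\cover{A}_n\), while an odd partition labels an associate pair of \(\cover{S}_n\) whose two members share a single irreducible restriction \(\negchar{\lambda}\). Every \(\el'\)-class of \(\cover{A}_n\) lies over an even class of \(\cover{S}_n\). It therefore suffices to classify the pairs of distinct irreducible \spin characters of \(\cover{A}_n\) that agree on \(\el'\)-classes, and I would split this into within-pair comparisons (the two halves \(\negchar{\lambda}^{\pm}\) of one conjugate pair) and cross-label comparisons (characters built from different \(\lambda\)).

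For the within-pair comparison I would compute the difference \(\negchar{\lambda}^{+}-\negchar{\lambda}^{-}\) directly from the \spin character values. On every class of odd-part cycle type the two halves coincide (each equals \(\tfrac12\negchar{\lambda}\)), and on the distinct-part classes other than \(C_\lambda\) the relevant value vanishes, so the difference is supported on the single class \(C_\lambda\) of cycle type \(\lambda\), where it is a nonzero multiple of \(\sqrt{\lambda_1\cdots\lambda_{\ell(\lambda)}}\). The one subtlety is that, although \(\lambda\) is an even partition and \(C_\lambda\) need not split in \(A_n\), the class \(C_\lambda\) does split in the double cover \(\cover{A}_n\) and so carries this difference. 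An element of \(\cover{A}_n\) over \(C_\lambda\) has order the least common multiple of the odd parts of \(\lambda\) together with twice its even parts, and this order is divisible by \(\el\) precisely when some part of \(\lambda\) is divisible by \(\el\). Hence \(\negchar{\lambda}^{+}\) and \(\negchar{\lambda}^{-}\) agree on \(\el'\)-classes if and only if \(\lambda\) has a part divisible by \(\el\), giving the asserted family.

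For the cross-label comparison I would first average over conjugation by an odd element \(g\in\cover{S}_n\setminus\cover{A}_n\). Conjugation by \(g\) permutes the \(\el'\)-classes and interchanges the two halves of each conjugate pair, so an agreement of two characters on \(\el'\)-classes promotes to an agreement of the corresponding restricted \(\cover{S}_n\)-characters on the even \(\el'\)-classes. When both labels are even this compares two self-associate characters \(\negchar{\lambda}\) and \(\negchar{\mu}\); since a self-associate character vanishes on every odd class (immediately from \(\negchar{\lambda}\otimes\mathrm{sgn}=\negchar{\lambda}\)), the agreement extends from the even \(\el'\)-classes to all \(\el'\)-classes of \(\cover{S}_n\), and \Cref{thm:maintheorem:double_cover_of_S_n} forces \(\lambda=\mu\), because a self-associate character is never a member of an agreeing associate pair. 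The mixed and odd--odd cross-label cases follow the same pattern, reducing to the separation of distinct labels on \(\el'\)-classes supplied by the \(\cover{S}_n\) theorem. Combined with the previous paragraph, this shows that the only coincidences are the conjugate pairs named in the statement.

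The main obstacle is the interaction between these two halves, caused by \(\cover{A}_n\) having strictly fewer \(\el'\)-classes than \(\cover{S}_n\). Pinning down the within-pair support is genuinely delicate: a naive inventory of the classes that split on passing from \(\cover{S}_n\) to \(\cover{A}_n\) records only the distinct-odd-part cycle types and misses the extra splitting of \(C_\lambda\) furnished by the cover, which is exactly the class on which the difference lives. In the cross-label analysis the same scarcity of \(\el'\)-classes means \Cref{thm:maintheorem:double_cover_of_S_n} cannot be quoted verbatim in the mixed case, where one must compare a self-associate character with the sum of an associate pair on the even \(\el'\)-classes alone; this has to be pushed through the injectivity, modulo the ideal generated by the power sums \(p_r\) with \(\el\mid r\), of the Schur \(Q\)-function labelling, and it is here that the hypothesis \(\el\neq3\) enters.
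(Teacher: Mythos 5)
Your overall strategy---deducing the \(\cover{A}_n\) statement from \Cref{thm:maintheorem:double_cover_of_S_n} by Clifford theory---is genuinely different from the paper's, which never quotes \Cref{thm:maintheorem:double_cover_of_S_n} at all: instead it reruns the central-character machinery (\Cref{prop:distinguishing_up_to_conjugacy}) against the generation theorem (\Cref{thm:double_cover_of_S_n_generation_theorem}) to conclude that two spin characters of \(\cover{A}_n\) agreeing on \(\el'\)-classes must be \(\cover{S}_n\)-conjugate, which rules out all cross-label coincidences in one stroke. Your within-pair analysis is essentially sound, and your even--even cross-label case is a correct and genuinely black-box use of \Cref{thm:maintheorem:double_cover_of_S_n}: since self-associate characters vanish on odd classes, agreement on even \(\el'\)-classes upgrades for free to agreement on all \(\el'\)-classes of \(\cover{S}_n\), where the theorem applies.

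The gap is the mixed case (one even label, one odd label) and part of the odd--odd case, and it is not a detail. After your averaging step you are comparing \(\negchar{\la}\) with \(\negchar{\mu}+\negchar{\mu}^\ass\) (mixed), or \(\negchar{\la}+\negchar{\la}^\ass\) with \(\negchar{\mu}+\negchar{\mu}^\ass\) (odd--odd), on \(\el'\)-classes of \(\cover{S}_n\); these are no longer irreducible, and \Cref{thm:maintheorem:double_cover_of_S_n} says nothing about sums of irreducibles: pairwise distinctness of characters on \(\el'\)-classes does not give linear independence there (indeed characters restricted to \(\el'\)-classes satisfy many linear relations, since there are far more characters than \(\el'\)-classes). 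Nor can you dodge this in the odd--odd case by comparing \(\negchar{\la}\) with \(\negchar{\mu}\) directly: when the class of cycle type \(\la\) is itself an \(\el'\)-class, \(\negchar{\la}\) is nonzero there while \(\negchar{\mu}\) vanishes, so the hypothesis of \Cref{thm:maintheorem:double_cover_of_S_n} (agreement on \emph{all} \(\el'\)-classes of \(\cover{S}_n\)) is simply unavailable. Your closing appeal to ``injectivity, modulo the ideal generated by the power sums \(p_r\) with \(\el \mid r\), of the Schur \(Q\)-function labelling'' is not a proof but a restatement of exactly what must be proved---and \Cref{prop:n<=14}(ii) shows that statement is false for \(\el=3\): for \(n=12\), \(\negchar{5,4,3}\downto{\cover{A}_n}\) agrees with \(\negchar{9,3}^\pm\) on \(3'\)-classes, precisely a mixed-case coincidence. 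So the hard content of the theorem, where \(\el\neq 3\) must enter, is exactly what your reduction leaves unproved; the paper supplies it by showing (via \Cref{thm:double_cover_of_S_n_generation_theorem} and \Cref{prop:distinguishing_up_to_conjugacy}) that agreement on the even \(\el'\)-class sums forces agreement on \emph{all} even class sums, hence \(\cover{S}_n\)-conjugacy of the two characters, which no mixed or odd--odd pair can satisfy.
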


When \(\el=2\), the unique determination stated in \Cref{thm:maintheorem:double_cover_of_S_n,thm:maintheorem:double_cover_of_A_n} relies on the character being known to be spin: there are spin and non-spin characters which agree on \(2'\)-classes (see \Cref{subsection:spin_equal_to_non-spin} below).
If \(\el\neq 2\), however, no spin character agrees with a non-spin character on \(\el'\)-classes, since they disagree on the central element of order \(2\).
Thus, together with \Cref{thm:maintheorem:A_n} and \cite[Corollary~2.1.3]{wildon2008distinctrows}, these theorems completely classify pairs of irreducible characters of \(\cover{S}_n\) and \(\cover{A}_n\) which agree on \(\el'\)-classes for \(\el \not\in \set{2,3}\). 

Our results have the following interpretation in terms of decomposition matrices.

\begin{corollary}
\label{cor:distinct_rows}
Let \(p\) be prime.
With the exceptions of the repeated rows labelled by the pairs of characters agreeing on \(p'\)-classes described in \Cref{thm:maintheorem:A_n,thm:maintheorem:double_cover_of_S_n,thm:maintheorem:double_cover_of_A_n},
there are no repeated rows in the decomposition matrices of \(A_n\) when \(p \neq 3\), nor in the decomposition matrices of \(\cover{S}_n\) and \(\cover{A}_n\) when \(p \not\in \set{2,3}\). 
\end{corollary}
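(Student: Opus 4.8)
The plan is to first establish the equivalence of (i) and (ii) stated in the introduction, and then to read the conclusion off from the three main theorems. Fixing a prime \(p\), I would start from the defining property of the decomposition numbers \(d_{\chi\varphi}\): for every \(p\)-regular (equivalently, \(p'\)-) element \(g\) one has \(\chi(g) = \sum_{\varphi \in \mathrm{IBr}(G)} d_{\chi\varphi}\,\varphi(g)\), where \(\mathrm{IBr}(G)\) is the set of irreducible \(p\)-Brauer characters of \(G\). Assembling these identities over all ordinary irreducible \(\chi\) and all \(p'\)-classes gives a matrix factorisation \(X = DB\), where \(X\) records the values of the ordinary irreducible characters on the \(p'\)-classes, \(D\) is the decomposition matrix, and \(B\) records the values of the irreducible Brauer characters on the \(p'\)-classes. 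The step I would lean on is that \(B\) is square and invertible: the number of irreducible Brauer characters equals the number of \(p'\)-classes, and the Brauer characters are linearly independent as functions on the \(p'\)-classes. Hence \(D = XB^{-1}\), so passing from an ordinary character to its row of \(D\) is an invertible linear transformation of its restriction to the \(p'\)-classes; in particular two rows of \(D\) coincide exactly when the corresponding ordinary characters agree on every \(p'\)-class, which is the equivalence (i) \(\iff\) (ii).

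With this in hand the corollary reduces to identifying, for each of the three groups, all pairs of ordinary irreducible characters that agree on \(p'\)-classes. For \(A_n\) with \(p \neq 3\), \Cref{thm:maintheorem:A_n} supplies this list directly. For the double covers I would partition the ordinary irreducible characters into spin and non-spin characters and treat the three kinds of pair separately. Since \(p \not\in \set{2,3}\) is odd, the central element \(z\) of order \(2\) lies in a \(p'\)-class; a spin character takes the value \(-\chi(1) \neq \chi(1)\) there while a non-spin character takes the value \(\psi(1)\), so no spin character can agree with a non-spin character on \(p'\)-classes. The non-spin characters are the lifts of the characters of \(S_n\) (respectively \(A_n\)): for \(\cover{S}_n\) these are pairwise distinct on \(p'\)-classes by Wildon's result \cite[Corollary~2.1.3]{wildon2008distinctrows} (as \(p > 2\)), and for \(\cover{A}_n\) the agreeing pairs among them are exactly those of \Cref{thm:maintheorem:A_n}. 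The agreeing pairs of spin characters are furnished by \Cref{thm:maintheorem:double_cover_of_S_n} and \Cref{thm:maintheorem:double_cover_of_A_n} respectively. Combining these cases yields precisely the exceptional pairs named in the statement, and the equivalence above translates each such pair into a repeated row, and conversely.

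The main obstacle does not lie in the corollary itself—once the equivalence is in place it is a bookkeeping exercise—but rather entirely in the three main theorems it invokes. The only genuine subtlety local to this proof is the spin/non-spin separation for the double covers, and this is handled cleanly by the value on the central involution as soon as \(p\) is odd; it is exactly this point that forces the restriction \(p \not\in \set{2,3}\) (rather than merely \(p \neq 3\)) for \(\cover{S}_n\) and \(\cover{A}_n\).
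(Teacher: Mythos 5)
Your proposal is correct and takes essentially the same route as the paper: the factorisation \(X = DB\) with the Brauer character table \(B\) invertible, giving the equivalence between agreement on \(p'\)-classes and equality of rows of \(D\), followed by reading off the exceptional pairs from the three main theorems. The spin/non-spin separation you spell out via the value at the central involution \(z\) (a \(p'\)-element once \(p\) is odd) is precisely the argument the paper makes in its introductory remarks rather than inside the proof of the corollary, so the content coincides.
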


\begin{proof}
We argue as in \cite[\S3]{wildon2008distinctrows}.
Let \(D\) be the decomposition matrix, \(B\) the Brauer character table, and \(X\) the ordinary character table restricted to \(p'\)-elements (for any finite group).
Note that \(DB = X\) and that \(B\) is invertible.
This implies that equal rows in \(D\) correspond to equal rows in \(X\) (which are precisely characters agreeing on \(p'\)-classes):
if the rows of \(D\) labelled by ordinary characters \(\chi\) and \(\psi\) are equal, and \(g\) is any \(p'\)-element of the group, then
\[
    \chi(g) = \sum_{\phi} D_{\chi,\phi} \phi(g) = \sum_{\phi} D_{\psi,\phi} \phi(g) = \psi(g)
\]
where the sums are over all Brauer characters \(\phi\) of the group; similarly for the converse by writing \(D = XB^{-1}\).
%
% The only remaining concern, for \(\cover{S}_n\) and \(\cover{A}_n\), is whether a row of the decomposition matrix labelled by a spin character can be equal to a row labelled by a non-spin character.
% But spin and non-spin characters differ on the central element of order \(2\), so for \(p \neq 2\) this does not happen.
\end{proof}

In all three of our main theorems, the infinite family of pairs of characters agreeing on \(\el'\)-classes can be identified immediately:
\begin{itemize}
    \item
for a self-conjugate partition \(\la\), the conjugate characters \(\chi^{\lambda^\plus}\) and \(\chi^{\lambda^\minus}\) of \(A_n\) differ only on the split class of cycle type consisting of the principal hook lengths of \(\lambda\); 
    \item
for an odd partition \(\la\) with distinct parts, the \spin character \(\negchar{\la}\) of \(\cover{S}_n\) and its associate differ only on the split class labelled by \(\la\);
    \item
for an even partition \(\la\) with distinct parts, the restriction to \(\cover{A}_n\) of the \spin character \(\negchar{\la}\) decomposes as a sum of two conjugate characters which differ only on the split class labelled by \(\la\).
\end{itemize}
The force of the theorems is that no other pairs of characters agree on the \(\el'\)-classes.

This is false, however, for \(\el=3\).
That is, there are pairs of characters which agree on the \(3'\)-classes -- and hence there are repeated rows in the \(3\)-modular decomposition matrix -- in addition to the conjugate and associate pairs identified above.
For \(A_n\), we identify two infinite families of such pairs (\Cref{thm:3-indistinguishable}), and remark on a striking similarity with known characters of \(A_n\) with equal vanishing sets.
For \(\cover{S}_n\) and \(\cover{A}_n\), we record all such pairs for \(n \leq 14\) (\Cref{prop:n<=14}).

To prove our main theorems, we consider the subalgebra of the centre of the group algebra generated by the \(\el'\)-class sums, and use central characters to determine the genuine characters.
This is the technique used by Wildon in \cite{wildon2008distinctrows} for the symmetric group.
Unlike for the symmetric group, in our cases it is not possible to generate the entire centre; nevertheless we show we can generate enough to distinguish characters up to conjugacy or associates.

This paper is structured as follows. We recall background on the character theory of symmetric and alternating groups and their double covers in \Cref{section:characters_of_A_n_and_covers}.
We prove a sufficient condition for a pair of characters of a normal subgroup to be conjugate in \Cref{section:char_values_and_conjugacy}, and prove the conjugacy class sum generation results required to use this condition in \Cref{section:class_sum_arithmetic}.
We deduce our main theorems in \Cref{section:main_proofs}.
We exhibit characters agreeing on \(3'\)-classes in \Cref{section:3-indistinguishable}.

\subsection{Remarks on the size of a set needed to determine a character}

For each group in consideration, as \(n\) grows, the number of \(\el'\)-classes grows exponentially, but the proportion of \(\el'\)-classes amongst all conjugacy classes decays exponentially.
These facts can be deduced from the numbers and proportions for \(S_n\) (see \cite[(1.36)]{hardyramanujan1918asymptotic} and \cite[Corollary~4.2]{hagis1971partitions}), noting that the numbers of classes and \(\el'\)-classes for the related groups are bounded linearly in terms of those for \(S_n\).
(More precisely, \cite{hagis1971partitions} asymptotically enumerates the partitions with no part repeated \(\el\) or more times; this is equal to the number of partitions with no part divisible by \(\el\) by Glaisher's Theorem (see for example \cite[Chapter~10]{gdjames1978reptheorysymgroups}).
This is an upper bound on the number of \(\el'\)-classes of \(S_n\), attained when \(\el\) is a prime power;
for a lower bound, consider the number of \(q'\)-classes for \(q\) a prime power factor of \(\el\).)

Our theorems therefore say that characters are determined (with the specified exceptions) by their values on a small proportion of classes.
In fact, the proof of \Cref{thm:generation_theorem} requires only \(\el'\)-classes of cycle type having at most four parts greater than \(1\), and so the characters are determined by their values on this restricted set of \(\el'\)-classes, of size \(O(n^4)\).

Nevertheless, we can determine characters by their values on smaller sets of classes if we remove the \(\el'\) restriction.
As noted in \cite{wildon2008distinctrows}, for the symmetric group \(S_n\), the values on cycles -- a set of size \(n\) -- suffice to determine a character: the class sums of the cycles generate the centre of the group algebra \cite{kramer1966symmetricgroup}, and arguing as in \Cref{section:char_values_and_conjugacy} gives the claim.
An analogous argument shows that for the alternating group \(A_n\), the values on cycles (of odd length) and values on products of two cycles of even length -- a set of size \(O(n^2)\) -- suffice to determine a character.
For the double covers \(\cover{A}_n\) and \(\cover{S}_n\), it can be shown that class sums of preimages of cycles of odd length together with all non-split even classes generate enough of the centre of the group algebra to determine characters up to associates or conjugacy; since \spin characters vanish on non-split classes, this implies that values on one of each of the split classes of cycles of odd length -- a set of size \(\ceil{\frac{n}{2}}\) -- suffice to determine a \spin character up to associates or conjugacy.

A related endeavour of Chow and Paulhus \cite{chowpaulhus2021algorithm} gives an algorithm to determine a character of the symmetric group from its values on a set of size \(O(n)\) -- although they require more values than just those on the cycles, they explicitly construct the indexing partition from these character values without requiring knowledge of the full character table.

\subsection{Remarks on spin characters modulo 2}
\label{subsection:spin_equal_to_non-spin}

Although \Cref{thm:maintheorem:double_cover_of_S_n,thm:maintheorem:double_cover_of_A_n} rule out two spin characters agreeing on \(2'\)-classes (except for the specified associate or conjugate pairs), there do exist spin characters which agree with a non-spin character on the \(2'\)-classes.
That is, although there are no repeated rows in the \(2\)-modular decomposition matrices of \(\cover{S}_n\) and \(\cover{A}_n\) with both rows labelled by spin characters, there are repeated rows with one row labelled by a spin character and the other by a non-spin character.
For example, when \(n=5\) the spin character \(\negchar{(4,1)}\) agrees with the non-spin character \(\chi^{(3,1^2)}\) on \(2'\)-classes \cite[p.~2]{ATLAS}, and the corresponding rows of the decomposition matrix are equal \cite[p.~885]{fayers2018irredchar2spinreps}.
(There are also repeated rows in the \(2\)-modular decomposition matrix of \(\cover{S}_n\) labelled by conjugate pairs of partitions, i.e. the repeated rows in the decomposition matrix of \(S_n\) classified by \cite[Theorem~1.1.1(ii)]{wildon2008distinctrows}.)

In fact, there exist infinitely many pairs of spin and non-spin characters which agree on \(2'\)-classes, and moreover whose affording representations are isomorphic after reduction modulo \(2\).
(Here the \(2\)-modular reduction of a representation of \(\cover{S}_n\) is viewed as a representation of \(S_n\), as is possible because the image of the central element of order \(2\) is trivial modulo \(2\)).
Indeed, consider \(\la = (k, k-1, \ldots, 1)\) with \(k \geq 2\), the \(2\)-core partition of \(n = \frac12 k(k+1)\).
Then \(\chi^\la\) is the only ordinary irreducible character of \(S_n\) lying in its own \(2\)-block, and hence its \(2\)-modular reduction remains irreducible and is the only irreducible \(2\)-modular character in that block.
It is known that, for \(\mu\) a strict partition, the spin character \(\negchar{\mu}\) lies in the same \(2\)-block as \(\chi^{\mathrm{dblreg}(\mu)}\), where \(\mathrm{dblreg}(\mu)\) denotes the \emph{2-regularisation} of the \emph{double} of \(\mu\), and furthermore \(\chi^{\mathrm{dblreg}(\mu)}\) occurs as a composition factor of \(\negchar{\mu}\) with multiplicity \(2^{\floor{m_0(\mu)/2}}\) where \(m_0(\mu)\) denotes the number of even parts of \(\mu\) \cite[Theorem~5.2]{bessenrodt1997coveringgroups2blocks}.
Pick \(\mu =  (2k-1, 2k-5, \ldots, 7, 3)\) or \(\mu = (2k-1, 2k-5, \ldots, 5, 1)\); it is easily verified that \(\mathrm{dblreg}(\mu) = \la\) and \(2^{\floor{m_0(\mu)/2}} =1 \), and hence the \(2\)-modular reductions of the representations affording \(\negchar{\mu}\) and \(\chi^\la\) are isomorphic.
Similar reasoning establishes isomorphisms between spin and non-spin representations in the \(2\)-block of weight~\(1\).
Nevertheless, not all such coincidences arise in this way (such as the example when \(n=5\) in the previous paragraph).

\section{(Spin) characters of symmetric and alternating groups}
\label{section:characters_of_A_n_and_covers}

\subsection{Symmetric and alternating groups and their characters}
\label{subsection:chars_of_S_n_and_A_n}

Here we state well-known results on the character theory of the symmetric and alternating groups; for a complete account, see, for example, \cite[Chapter~2]{jameskerber1984reptheory}.

The ordinary irreducible characters of \(S_n\) are indexed by partitions of \(n\), and we denote the character corresponding to \(\lambda\) as \(\chi^\lambda\).
If \(\lambda \neq \lambda'\), then \(\chi^\lambda\downto{A_n} = \chi^{\lambda'}\downto{A_n}\) is an irreducible character of \(A_n\).
If \(\lambda = \lambda'\),  then \(\chi^\lambda\downto{A_n}\) splits into a conjugate pair of irreducible characters of \(A_n\) denoted \(\chi^{\lambda^\pm}\) (the signs can be assigned arbitrarily).
Moreover, we obtain a complete irredundant set of ordinary irreducible characters of \(A_n\) in this way.

We denote by \(\Ccl_\la\) the \(S_n\)-conjugacy class of permutations of cycle type \(\la\).
The \(S_n\)-conjugacy classes which split in \(A_n\) are precisely those of cycle type with distinct parts all of which are odd; for \(\la\) with distinct parts all odd, we denote the resulting \(A_n\)-conjugacy classes by \(\Ccl_\la^\pm\) (the signs can be assigned arbitrarily).

A \emph{principal hook length} of a partition refers to a hook length of a box on the main diagonal in the Young diagram of the partition.
For \(\la\) self-conjugate, the principal hook lengths of \(\la\) are distinct odd integers; write \(\diag(\la)\) for the partition whose parts are the principal hook lengths of \(\la\).
The pair of characters of \(A_n\) labelled by a self-conjugate partition \(\la\) differ only on the split class of cycle type \(\diag(\la)\), from which we deduce the infinite family of characters agreeing on \(\el'\)-classes stated in \Cref{thm:maintheorem:A_n}.

\subsection{Double covers and spin characters}
\label{subsection:chars_of_double_covers}

Here we state elementary results on the projective representation theory of the symmetric and alternating groups; for a complete account, see, for example, \cite{hoffmanhumphreys1992projreps}.

A projective representation of a group \(G\) is a group homomorphism \(G \to \PGL(V)\) for some \(V\).
There is a correspondence between projective representations of \(G\) and linear representations of a central extension of \(G\) by its Schur multiplier.
When \(n \geq 4\) the Schur multiplier of the alternating and symmetric groups are cyclic of order \(2\) (except for \(n\in\set{6,7}\) for the alternating group), so it suffices to consider the linear representations of their double covers.

We use the double cover \(\cover{S}_n\) described in \cite[pp.~18--19]{hoffmanhumphreys1992projreps}, which has generators \(z, t_1, \ldots, t_{n-1}\) subject to the relations
\begin{gather*}
    \flexbox[c]{z^2 = 1;}{(t_jt_{j+1})^3 = z;} \quad\qquad
    \flexbox[c]{t_j^2 = z;}{(t_jt_{j+1})^3 = z;} \quad\qquad
    (t_jt_{j+1})^3 = z;  \\
    t_j t_k = z t_k t_j \,\text{ for \(\abs{j-k}>1\)}.
\end{gather*}
There is a projection map \(\projmap \colon \cover{S}_n \to S_n\) with kernel \(\set{1, z}\).
Define the sign map \(\cover{S}_n \to \set{\pm 1}\) by composition of the usual sign map with \(\projmap\); its kernel is \(\cover{A}_n\), a double cover of the alternating group.

The image of \(z\) under a representation of \(\cover{S}_n\) or \(\cover{A}_n\) is \(\pm I\); if it is \(I\), then the representation corresponds to a linear representation of \(S_n\) or \(A_n\); if it is \(-I\), then the representation is called \emph{spin} and it corresponds to a projective representation of \(S_n\) or \(A_n\) which is not linear.
Since the linear representations of \(S_n\) and \(A_n\) are already described, we are interested in the \spin representations of \(\cover{S}_n\) and \(\cover{A}_n\).

There is another choice of double cover of \(S_n\) (though not for \(A_n\) -- both covers of \(S_n\) yield isomorphic covers of \(A_n\)) \cite[pp,~22-23]{hoffmanhumphreys1992projreps}.
However, the \spin characters of one double cover can be obtained from the other by multiplying the values on \(\cover{S}_n \setminus \cover{A}_n\) by \(i\) \cite[\S6.7]{ATLAS}, 
and hence the choice of double cover makes no difference to the determination of the character by its values.
% also: \spin characters of \(\cover{S}_n\) are nonzero on \(\cover{S}_n \setminus \cover{A}_n\) only on the permutation of cycle type labelling the character anyway

For each strict partition (that is, partition with distinct parts) \(\la\) of \(n\), there is a \spin ordinary irreducible character \(\negchar{\la}\) of \(\cover{S}_n\).
Each character of \(\cover{S}_n\) has an \emph{associate}, denoted \(\blank^\ass\), which takes values of opposite sign on odd elements.
We call a partition \emph{even} or \emph{odd} according to whether permutations of that cycle type are even or odd.
If \(\la\) is odd, then \(\negchar{\la} \neq \negchar{\la}^\ass\), and \(\negchar{\la}\downto{\cover{A}_n} = \negchar{\la}^\ass\downto{\cover{A}_n}\) is irreducible and self-conjugate.
If \(\la\) is even, then \(\negchar{\la} = \negchar{\la}^\ass\), and \(\negchar{\la}\downto{\cover{A}_n}\) splits into a conjugate pair of irreducible characters of \(\cover{A}_n\) denoted \(\negchar{\la}^\pm\) (the signs can be assigned arbitrarily).
We obtain complete irredundant sets of ordinary irreducible \spin characters of \(\cover{S}_n\) and \(\cover{A}_n\) in this way.

Write \(\coverCcl\) for the preimage in \(\cover{S}_n\) of an \(S_n\)-conjugacy class \(\Ccl\).
Say elements of \(\coverCcl_\la\) are of cycle type \(\la\).
The set \(\coverCcl_\la\) is either itself an \(\cover{S}_n\)-conjugacy class, or splits into two \(\cover{S}_n\)-conjugacy class.
Splitting occurs if and only if \(\la\) has all parts odd or \(\la\) is odd with all parts distinct; in these cases, write \(\coverCcl_\la^\uno\) and \(\coverCcl_\la^\zed\) for the resulting \(\cover{S}_n\)-conjugacy classes (the labels can be assigned arbitrarily).
The \(\cover{S}_n\)-conjugacy classes which split in \(\cover{A}_n\) are precisely those of cycle type with all parts odd or all parts distinct (which includes all classes which split from \(S_n\) to \(\cover{S}_n\)); if \(\Ccl\) is such a \(\cover{S}_n\)-conjugacy class, we denote the resulting \(\cover{A}_n\)-conjugacy classes \(\Ccl^\pm\) (the signs can be assigned arbitrarily).

An associate pair of irreducible \spin characters of \(\cover{S}_n\), or a conjugate pair of irreducible \spin characters of \(\cover{A}_n\), differ only on the split class of cycle type equal to the labelling partition \cite[Theorem~8.7]{hoffmanhumphreys1992projreps}.
From this fact we deduce the infinite families of characters agreeing on \(\el'\)-classes stated in \Cref{thm:maintheorem:double_cover_of_S_n,thm:maintheorem:double_cover_of_A_n}.

An irreducible \spin character of \(\cover{S}_n\) or \(\cover{A}_n\) necessarily vanishes except on the classes of cycle type with all parts odd or of cycle type equal to the partition which labels the character %(with the latter exception occurring for \(\cover{S}_n\) only when the cycle type is odd) 
\cite[Theorem~8.7]{hoffmanhumphreys1992projreps}.
In particular, if \(\el\) is even, a pair of irreducible \spin characters of \(\cover{S}_n\) and \(\cover{A}_n\) which agree on \(\el'\)-classes can differ only on the classes of cycle type the partitions labelling the characters;
if the characters are not conjugate or associate (that is, not labelled by the same partition), then on each of these classes at least one of the pair vanishes,
% and hence the pair agree everywhere that they are both nonzero,
contradicting their orthogonality.
This yields the cases of \Cref{thm:maintheorem:double_cover_of_S_n,thm:maintheorem:double_cover_of_A_n} where \(\el\) is even.

\section{Character values and conjugacy of characters}
\label{section:char_values_and_conjugacy}

\newcommand{\U}{\mathcal{U}}
\newcommand{\M}{\mathcal{M}}
\newcommand{\Dcl}{\mathcal{D}}
\newcommand{\Ecl}{\mathcal{E}}

Let \(G\) be a finite group.
Given a conjugacy class \(\Ccl\), let \(s_\Ccl \in Z(\Q G)\) denote the class sum, and given a character \(\chi\) we write \(\chi(\Ccl)\) for the value \(\chi(g)\) for any \(g \in \Ccl\).
More generally, if \(\Dcl\) is a union of conjugacy classes of \(G\), write \(s_\Dcl = \sum_{\Ccl \subseteq \Dcl} s_{\Ccl}\) for the sum of the constituent class sums, and \(\chi(\Dcl) = \sum_{\Ccl \subseteq \Dcl} \chi(\Ccl)\) for the sum of character values on each constituent class.

Suppose \(G\) has a normal subgroup \(H\).
Recall that any \(G\)-conjugacy class is either disjoint from \(H\), or is a union of \(H\)-conjugacy classes of equal size.
The aim of this section is to prove a sufficient condition on the values of a pair of characters of \(H\) for the characters to be \(G\)-conjugate (\Cref{prop:distinguishing_up_to_conjugacy}).

\subsection{Central characters}

The following is the key lemma which allows us to deduce further relations between character values from a given set.
This argument was used by Wildon in \cite{wildon2008distinctrows}, though he considers only the case where the unions are single classes.

\begin{lemma}
\label{lemma:central_character_argument}
Let \(\chi\) and \(\psi\) be irreducible characters of \(G\) of equal degree.
Let \(\U\) be a set of unions of \(G\)-conjugacy classes, such that all classes in a given union are of the same size.
Suppose that \(\chi(\Dcl) = \psi(\Dcl)\) for all \(\Dcl \in \U\).
If \(\Ecl\) is a union of \(G\)-conjugacy classes of equal size such that \(s_{\Ecl}\) lies in the algebra generated by \(\setbuild{ s_\Dcl}{\Dcl \in \U}\),
then \(\chi(\Ecl) = \psi(\Ecl)\).
\end{lemma}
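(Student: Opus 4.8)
The plan is to use \emph{central characters}. Recall that for an irreducible character $\chi$ of $G$, Schur's lemma guarantees that each class sum $s_\Ccl \in Z(\Q G)$ acts as a scalar on the simple $\C G$-module affording $\chi$; writing $\omega_\chi \colon Z(\Q G) \to \C$ for the resulting \emph{algebra homomorphism}, this scalar is $\omega_\chi(s_\Ccl) = \abs{\Ccl}\,\chi(\Ccl)/\chi(1)$. The crucial feature is that $\omega_\chi$ respects multiplication, so any relation expressing one class sum as a polynomial in others is transported to a relation among these scalars.

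First I would compute $\omega_\chi$ on a union. If $\Dcl$ is a union of $G$-conjugacy classes each of the same size $m$, then by linearity $\omega_\chi(s_\Dcl) = \sum_{\Ccl \subseteq \Dcl} \abs{\Ccl}\chi(\Ccl)/\chi(1) = m\,\chi(\Dcl)/\chi(1)$. Now invoke the hypotheses: since $\chi$ and $\psi$ have equal degree $d := \chi(1) = \psi(1)$ and $\chi(\Dcl) = \psi(\Dcl)$ for every $\Dcl \in \U$, it follows that $\omega_\chi(s_\Dcl) = m\chi(\Dcl)/d = m\psi(\Dcl)/d = \omega_\psi(s_\Dcl)$. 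Thus $\omega_\chi$ and $\omega_\psi$ agree on the generating set $\setbuild{s_\Dcl}{\Dcl \in \U}$.

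Since $\omega_\chi$ and $\omega_\psi$ are algebra homomorphisms agreeing on a generating set, they agree on the entire subalgebra of $Z(\Q G)$ generated by $\setbuild{s_\Dcl}{\Dcl \in \U}$. By assumption $s_\Ecl$ lies in this subalgebra, so $\omega_\chi(s_\Ecl) = \omega_\psi(s_\Ecl)$. Finally, writing $m'$ for the common size of the classes comprising $\Ecl$ and applying the first-step computation once more, this equality reads $m'\chi(\Ecl)/d = m'\psi(\Ecl)/d$; cancelling the nonzero factor $m'/d$ gives $\chi(\Ecl) = \psi(\Ecl)$, as required.

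As for difficulty, once the homomorphism property of $\omega_\chi$ is in hand this is essentially a bookkeeping exercise with no real obstacle. The two hypotheses are each used in exactly one place and neither can be dropped: the equal-size condition on each union is what makes $\omega_\chi(s_\Dcl)$ a clean scalar multiple of $\chi(\Dcl)$ rather than a weighted sum, and the equal-degree condition $\chi(1) = \psi(1)$ is what makes the proportionality constants coincide, so that agreement of the character-value sums is equivalent to agreement of the central-character values.
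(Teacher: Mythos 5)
Your proposal is correct and is essentially identical to the paper's own proof: both use the central characters \(\omega_\chi, \omega_\psi\), verify via the equal-degree and equal-size hypotheses that they agree on the generating set \(\setbuild{s_\Dcl}{\Dcl \in \U}\), hence on the generated subalgebra, and then convert \(\omega_\chi(s_\Ecl) = \omega_\psi(s_\Ecl)\) back into \(\chi(\Ecl) = \psi(\Ecl)\). No differences worth noting.
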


To prove this lemma, we use central characters.
A \emph{central character} of a group \(G\) is a \(\Q\)-algebra homomorphism \(Z(\Q G) \to \C\) (this is sometimes defined with other fields in place of \(\Q\) or \(\C\), but for our purposes this suffices).
Given an irreducible character \(\chi\) of \(G\), there is a corresponding central character \(\omega_\chi\) which sends an element \(\alpha \in Z(\Q G)\) to the scalar by which \(\alpha\) acts on the representation afforded by \(\chi\); thus on a conjugacy class sum \(s_\Ccl\) the central character \(\omega_\chi\) is defined by
\[
    \omega_\chi(s_\Ccl) = \frac{\abs{\Ccl}\chi(\Ccl)}{\chi(1)}.
\]

\begin{proof}[Proof of \Cref{lemma:central_character_argument}]
Write \(d\) for the common value \(\chi(1) = \psi(1)\).
Let \(\omega_\chi\), \(\omega_\psi\) denote the central characters corresponding to \(\chi\) and \(\psi\) respectively.
If \(\Dcl \in \U\) is a union of conjugacy classes of size \(m\), we have
\[
\omega_\chi (s_{\Dcl})
    = \frac{m}{d} \chi(\Dcl)
    = \frac{m}{d} \psi(\Dcl)
    = \omega_\psi ( s_{\Dcl} ),
\]
so \(\omega_\chi\) and \(\omega_\psi\) agree on \(\setbuild{s_{\Dcl}}{\Dcl \in \U}\) and hence on the algebra that set generates.
Suppose \(\Ecl\) is a union of \(G\)-conjugacy class of equal size \(r\) such that \(s_{\Ecl}\) lies in that algebra.
Then
\[
\chi(\Ecl)
    = \frac{d}{r} \omega_\chi (  s_{\Ecl} )
    = \frac{d}{r}  \omega_\psi( s_{\Ecl} )
    = \psi(\Ecl). \qedhere
\]
% as required.
\end{proof}

\subsection{Distinguishing characters up to conjugates}

\begin{lemma}
\label{lemma:char_value_on_split_class}
Let \(\chi\) be a character of \(H\).
\begin{enumerate}[(i)]
    \item\label{item:value_as_sum_over_classes}
Let \(\Ccl\) be a \(G\)-conjugacy class contained in \(H\),
and let \(k\) be the number of \(H\)-conjugacy classes into which \(\Ccl\) splits.
Then
\[
    \chi\upto{G}(\Ccl) = \frac{\abs{G : H}}{k}  \chi(\Ccl).
\]
% where \(k\) is the number of \(H\)-conjugacy classes into which \(\Ccl\) splits.
    \item\label{item:value_as_sum_over_conjugates}
Let \(\stab_G(\chi)\) denote the stabiliser of \(\chi\) under the conjugation action of \(G\), and let \(\sim\) denote the equivalence relation of \(G\)-conjugacy of characters. Then
\[
    \chi\upto{G}\downto{H} = \abs{\stab_G(\chi) : H}  \sum_{\psi \sim \chi} \psi.
\]
\end{enumerate}
\end{lemma}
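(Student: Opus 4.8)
The plan is to derive both parts directly from the explicit formula for an induced character, exploiting the normality of \(H\) so that the support condition becomes trivial. For a character \(\chi\) of \(H\) and any \(g \in G\),
\[
    \chi\upto{G}(g) = \frac{1}{\abs{H}}\sum_{\substack{x \in G \\ x^{-1}gx \in H}} \chi(x^{-1}gx),
\]
and since \(H \trianglelefteq G\) the condition \(x^{-1}gx \in H\) holds for every \(x \in G\) precisely when \(g \in H\); so for \(g \in H\) the sum runs over all of \(G\). Both statements are then obtained by reorganising this single sum in two different ways.

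For part~(i), I would fix \(g \in \Ccl\) (the left-hand side is well-defined as \(\chi\upto{G}\) is a class function on \(G\)) and reindex the sum over \(x\) by its effect \(x^{-1}gx\): the map \(x \mapsto x^{-1}gx\) is onto \(\Ccl\) with every fibre of size \(\abs{C_G(g)} = \abs{G}/\abs{\Ccl}\), giving
\[
    \chi\upto{G}(g) = \frac{\abs{G}}{\abs{H}\,\abs{\Ccl}} \sum_{c \in \Ccl} \chi(c).
\]
Now I invoke the fact recalled at the start of the section that \(\Ccl\) is a union of \(k\) \(H\)-conjugacy classes all of size \(\abs{\Ccl}/k\); since \(\chi\) is constant on each such \(H\)-class, the inner sum equals \(\tfrac{\abs{\Ccl}}{k}\,\chi(\Ccl)\), where, following the summation convention of this section, \(\chi(\Ccl)\) is read as the sum of the values of \(\chi\) over the \(k\) constituent \(H\)-classes. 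Substituting and cancelling \(\abs{\Ccl}\) yields \(\chi\upto{G}(\Ccl) = \frac{\abs{G:H}}{k}\chi(\Ccl)\), as required.

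For part~(ii), I would instead keep \(h \in H\) fixed and read \(\chi(x^{-1}hx)\) as the value at \(h\) of the \(G\)-conjugate character \({}^{x}\chi\), so that \(\chi\upto{G}\downto{H} = \frac{1}{\abs{H}}\sum_{x \in G} {}^{x}\chi\). Two observations then finish the computation: first, \(H \leq \stab_G(\chi)\), because \(\chi\) is a class function on \(H\) and so is fixed under conjugation by its own elements; second, by orbit--stabiliser the conjugate \({}^{x}\chi\) depends only on the coset \(x\,\stab_G(\chi)\), and each distinct conjugate --- that is, each character \(\psi\) with \(\psi \sim \chi\) --- arises for exactly \(\abs{\stab_G(\chi)}\) values of \(x\). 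Hence \(\sum_{x \in G} {}^{x}\chi = \abs{\stab_G(\chi)} \sum_{\psi \sim \chi} \psi\), and dividing by \(\abs{H}\) gives \(\chi\upto{G}\downto{H} = \abs{\stab_G(\chi):H} \sum_{\psi \sim \chi} \psi\).

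Neither part presents a serious obstacle once the induction formula is in hand; the work is entirely careful counting. In part~(i) the only delicate point is combining the centraliser factor produced by the induction formula with the factor \(1/k\) coming from the splitting of \(\Ccl\) into \(H\)-classes --- this is exactly where the hypothesis that all constituent \(H\)-classes have equal size is essential --- together with fixing the convention that \(\chi(\Ccl)\) sums over those classes. In part~(ii) the point needing the most care is the multiplicity count \(\abs{\stab_G(\chi)}\), for which I would pin down the conjugation-action convention at the outset so that distinct conjugates are consistently indexed by cosets of \(\stab_G(\chi)\).
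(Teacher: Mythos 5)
Your proposal is correct and follows essentially the same route as the paper: both parts are derived from the induction formula \(\chi\upto{G}(g) = \frac{1}{\abs{H}}\sum_{x\in G}\chi(x^{-1}gx)\) (valid on all of \(G\) by normality of \(H\)), with part~(i) obtained by collapsing the sum along fibres of the conjugation map (equivalently, cosets of \(C_G(g)\)) and the equal-size splitting of \(\Ccl\) into \(k\) \(H\)-classes, and part~(ii) by grouping the sum over cosets of \(\stab_G(\chi)\). The paper states this argument in a single sentence; your write-up simply fills in the routine counting, including the correct reading of \(\chi(\Ccl)\) as a sum over the constituent \(H\)-classes.
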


\begin{proof}
The expression \(\chi\upto{G}(g) = \frac{1}{\abs{H}} \sum_{x \in G} \chi(x^{-1}gx)\) for \(g \in H\) yields both parts routinely
(for the first part, break up the sum over cosets of the centraliser of \(g\) in \(G\) and use that the \(H\)-conjugacy classes into which \(\Ccl\) splits are of equal size \(\abs{\Ccl}/k\); for the second part, break up the sum over cosets of the stabiliser of \(\chi\) in \(G\)).
\end{proof}

\begin{lemma}
\label{lemma:equivalence_to_G-conjugacy}
Let \(\chi\), \(\psi\) be irreducible characters of \(H\).
The following are equivalent:
\begin{enumerate}[(i)]
    \item\label{item:sum_of_char_values}
\(\chi(\Ccl) = \psi(\Ccl)\) for every \(G\)-conjugacy class \(\Ccl\) contained in \(H\);
    \item\label{item:induced_chars}
\(\chi\upto{G} = \psi\upto{G}\);
    \item\label{item:G-conjugate}
\(\chi\) and \(\psi\) are \(G\)-conjugate.
\end{enumerate}
\end{lemma}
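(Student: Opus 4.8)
The plan is to close the cycle of equivalences by establishing \((i)\Leftrightarrow(ii)\) and \((ii)\Leftrightarrow(iii)\), in each case drawing on \Cref{lemma:char_value_on_split_class}.

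For \((i)\Leftrightarrow(ii)\), I would first note that since \(H\) is normal, the induced character \(\chi\upto{G}\) vanishes off \(H\): in the formula \(\chi\upto{G}(g)=\frac{1}{\abs{H}}\sum_{x\in G}\chi(x^{-1}gx)\) every summand is zero unless \(x^{-1}gx\in H\), which for normal \(H\) forces \(g\in H\). Thus \(\chi\upto{G}\) is determined by its values on the \(G\)-classes contained in \(H\), and by \Cref{lemma:char_value_on_split_class}(i) each such value is \(\frac{\abs{G:H}}{k}\chi(\Ccl)\), where the integer \(k\) depends only on \(\Ccl\) and \(H\). Since \(\frac{\abs{G:H}}{k}\) is a fixed nonzero scalar, agreement of \(\chi\) and \(\psi\) on every \(G\)-class in \(H\) is equivalent to agreement of \(\chi\upto{G}\) and \(\psi\upto{G}\) on every \(G\)-class (both sides vanishing off \(H\)), which is precisely \(\chi\upto{G}=\psi\upto{G}\).

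For \((iii)\Rightarrow(ii)\), I would use that induction is invariant under the conjugation action of \(G\): conjugation by \(g\in G\) is an automorphism of \(G\) fixing \(H\) setwise, so \((\chi^{g})\upto{G}=(\chi\upto{G})^{g}=\chi\upto{G}\), the last equality because \(\chi\upto{G}\), being a character of \(G\), is a class function on \(G\). Hence \(G\)-conjugate characters of \(H\) induce to the same character.

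The substantive step is \((ii)\Rightarrow(iii)\). I would restrict the equality \(\chi\upto{G}=\psi\upto{G}\) to \(H\) and apply \Cref{lemma:char_value_on_split_class}(ii), obtaining
\[
    \abs{\stab_G(\chi):H}\sum_{\phi\sim\chi}\phi
    = \abs{\stab_G(\psi):H}\sum_{\phi\sim\psi}\phi .
\]
Both sides are nonnegative integer combinations of irreducible characters of \(H\), and these are linearly independent, so the two multisets of constituents coincide. Since \(\chi\) occurs on the left (taking \(\phi=\chi\), with coefficient \(\abs{\stab_G(\chi):H}\geq1\)), it must occur on the right, that is \(\chi\sim\psi\); thus \(\chi\) and \(\psi\) are \(G\)-conjugate. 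The only delicate point is this final comparison, but \Cref{lemma:char_value_on_split_class} has already repackaged the restriction of an induced character as a clean sum over a single conjugacy orbit, so linear independence closes the argument at once and no genuine obstacle remains.
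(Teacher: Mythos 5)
Your proposal is correct and follows essentially the same route as the paper: the paper likewise derives (i)$\Leftrightarrow$(ii) from \Cref{lemma:char_value_on_split_class}(i) together with the vanishing of induced characters off the normal subgroup, and (ii)$\Leftrightarrow$(iii) from \Cref{lemma:char_value_on_split_class}(ii). You have simply written out the details (the scalar $\abs{G:H}/k$ comparison, conjugation-invariance of induction, and the linear-independence argument) that the paper's terse proof leaves implicit.
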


\begin{proof}
Note characters induced from a normal subgroup are zero off that subgroup.
Then the equivalence of \ref{item:sum_of_char_values} and \ref{item:induced_chars} follows from \Cref{lemma:char_value_on_split_class}\ref{item:value_as_sum_over_classes}, and the equivalence of \ref{item:induced_chars} and \ref{item:G-conjugate} follows from \Cref{lemma:char_value_on_split_class}\ref{item:value_as_sum_over_conjugates}.
\end{proof}

\begin{proposition}
\label{prop:distinguishing_up_to_conjugacy}
Let \(\U\) be a set of unions of \(H\)-conjugacy classes, such that all classes in a given union are of the same size.
Suppose the subalgebra of \(Z(\Q H)\) generated by \(\setbuild{s_{\Dcl}}{\Dcl \in \U}\) contains all the sums of \(G\)-conjugacy classes contained in \(H\).
If \(\chi\) and \(\psi\) are irreducible characters of \(H\) of equal degree such that \( \chi(\Dcl) =  \psi(\Dcl)\) for all \(\Dcl \in \U\), then \(\chi\) and \(\psi\) are \(G\)-conjugate.
\end{proposition}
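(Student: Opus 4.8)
The plan is to assemble the two lemmas already established in this section; the proposition is essentially their composition. The crucial observation is that \Cref{lemma:central_character_argument} is stated for an arbitrary finite group, so I would apply it with \(H\) playing the role of the ambient group rather than \(G\). Under this reading the hypotheses of the proposition match those of the lemma verbatim: \(\U\) is a set of unions of \(H\)-conjugacy classes in which the classes of each union are of equal size, the characters \(\chi\) and \(\psi\) are irreducible of equal degree, and they agree on every \(\Dcl \in \U\). So the conclusion of \Cref{lemma:central_character_argument} becomes available for \(H\): for any union \(\Ecl\) of equal-sized \(H\)-conjugacy classes whose sum \(s_\Ecl\) lies in the subalgebra generated by \(\setbuild{s_\Dcl}{\Dcl \in \U}\), we have \(\chi(\Ecl) = \psi(\Ecl)\).

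Next I would feed the correct unions into this machine. Recall from the opening of this section that any \(G\)-conjugacy class \(\Ccl\) contained in \(H\) is a union of \(H\)-conjugacy classes of equal size; hence each such \(\Ccl\) is an admissible choice of \(\Ecl\). By the standing hypothesis of the proposition, its class sum \(s_\Ccl\) lies in the subalgebra of \(Z(\Q H)\) generated by \(\setbuild{s_\Dcl}{\Dcl \in \U}\), so \Cref{lemma:central_character_argument} applies and yields \(\chi(\Ccl) = \psi(\Ccl)\). Letting \(\Ccl\) range over all \(G\)-conjugacy classes contained in \(H\) produces exactly condition \ref{item:sum_of_char_values} of \Cref{lemma:equivalence_to_G-conjugacy}.

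Finally I would invoke \Cref{lemma:equivalence_to_G-conjugacy} directly: condition \ref{item:sum_of_char_values} is equivalent to condition \ref{item:G-conjugate}, which states that \(\chi\) and \(\psi\) are \(G\)-conjugate. This is the desired conclusion.

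I do not expect a genuine obstacle here, since all the substance is carried by the two preceding lemmas. The only points demanding care are bookkeeping ones. First, one must apply \Cref{lemma:central_character_argument} to \(H\) in place of \(G\), so that the generating unions \(\Dcl\) are taken relative to \(H\) while the target unions \(\Ecl\) are the \(G\)-conjugacy classes sitting inside \(H\). Second, one must verify that such a \(G\)-class genuinely qualifies as a union of equal-sized \(H\)-classes, so that the lemma is applicable to it; this is precisely the normal-subgroup fact quoted at the start of the section. Keeping these two roles distinct is the whole of the argument.
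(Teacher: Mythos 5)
Your proposal is correct and is exactly the paper's own proof, which simply says to combine \Cref{lemma:central_character_argument} (applied with \(H\) as the ambient group) with \Cref{lemma:equivalence_to_G-conjugacy}; your write-up just makes the bookkeeping explicit. The two points you flag — reading the lemma with \(H\) in place of \(G\), and using the normal-subgroup fact that a \(G\)-class in \(H\) is a union of equal-sized \(H\)-classes — are indeed the only things to check, and you check them correctly.
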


\begin{proof}
Combine \Cref{lemma:central_character_argument,lemma:equivalence_to_G-conjugacy}.
\end{proof}

\section{Generation of the centre of the group algebra}
\label{section:class_sum_arithmetic}

The aim of this section is to prove, working in \(Z(\Q A_n)\), that the even \(S_n\)-conjugacy \(\el'\)-class sums generate all the even \(S_n\)-conjugacy class sums, and likewise with \(\cover{S}_n\) and \(\cover{A}_n\) in place of \(S_n\) and \(A_n\) (requiring \(\el\) odd in this case).

For any group \(G\), the centre of the group algebra \(Z(\Q G)\) has linear basis the set of conjugacy class sums.
Furthermore, if \(H\) is a normal subgroup of \(G\) and \(\Ccl_1\), \(\Ccl_2\) are \(G\)-conjugacy classes in \(H\), then the product \(s_{\Ccl_1} s_{\Ccl_2}\) can be written uniquely as a positive integral linear combination of sums of \(G\)-conjugacy classes in \(H\).

\subsection{Centre of the group algebra of the alternating group}

We will induct on the following statistic.

\begin{definition}\
The \emph{support} of a partition \(\la\), denoted \(\support{\la}\), is the number of non-fixed points of a permutation of cycle type \(\la\).
\end{definition}

That is, the support of \(\la\) is the sum of the parts of \(\la\) strictly greater than \(1\).

Throughout this section, for convenience we omit writing the parts of a partition which are equal to \(1\), and allow a partition of any integer \(m \leq n\) to be viewed as a partition of \(n\) by appending \(1\)s as necessary.
Again for convenience, we do not necessarily write the parts of a partition in decreasing order.
These abuses of notation are acceptable here as they do not alter which conjugacy class of \(S_n\) is determined via cycle type.

Write \(s_\la = s_{\Ccl_\la}\) for the sum of the class \(\Ccl_\la\) of permutations of cycle type \(\la\).
Given partitions \(\la, \mu, \nu\), we say \(\nu\) is \emph{involved in} the product \(s_\la s_\mu\) if \(s_{\nu}\) appears with nonzero (hence positive integer) coefficient in the product (when written with respect to the \(S_n\)-conjugacy class basis).
This occurs if and only if there exist permutations \(\sigma, \tau, \rho\) of cycle types \(\la, \mu, \nu\) such that \(\sigma\tau = \rho\).

Given partitions \(\la\) and \(\mu\),
let \(\la \sqcup \mu\) denote the partition consisting of all the parts of \(\la\) and \(\mu\) (parts strictly greater than \(1\) only, arranged in decreasing order).
The following simple fact concerning multiplication of class sums was used by Kramer to show that the set \(\setbuild{s_{(i)}}{1 \leq i \leq n}\) generates \(Z(\Q S_n)\) \cite{kramer1966symmetricgroup}.

\begin{lemma}
\label{lemma:summand_of_greatest_support}
Let \(\la, \mu\) be partitions such that \(\support{\la} + \support{\mu} \leq n\).
The unique partition of support greater than or equal to \(\support{\la} + \support{\mu}\) which is involved in the product \(s_\la s_\mu\) is \(\la \sqcup \mu\).
\end{lemma}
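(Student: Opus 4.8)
The plan is to work directly with the combinatorial characterisation of ``involved'' recorded just before the statement: a partition \(\nu\) is involved in \(s_\la s_\mu\) precisely when there exist permutations \(\sigma\), \(\tau\), \(\rho\) of cycle types \(\la\), \(\mu\), \(\nu\) with \(\sigma\tau = \rho\). The one idea that drives everything is that the set of non-fixed points of a product is contained in the union of the non-fixed-point sets of the factors: a point fixed by both \(\sigma\) and \(\tau\) is fixed by \(\sigma\tau\). Writing \(\support{\sigma}\) for the set of points moved by \(\sigma\), this gives \(\support{\rho} \subseteq \support{\sigma} \cup \support{\tau}\), and hence
\[
\support{\nu} = \abs{\support{\rho}} \leq \abs{\support{\sigma} \cup \support{\tau}} \leq \abs{\support{\sigma}} + \abs{\support{\tau}} = \support{\la} + \support{\mu}
\]
for every involved \(\nu\). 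Thus no involved partition has support exceeding \(\support{\la} + \support{\mu}\).

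First I would check that \(\la \sqcup \mu\) actually attains this bound. Since \(\support{\la} + \support{\mu} \leq n\), there is room to choose \(\sigma\) of type \(\la\) and \(\tau\) of type \(\mu\) acting on disjoint point sets; then \(\sigma\tau\) has cycle type \(\la \sqcup \mu\) and support \(\support{\la} + \support{\mu}\). So \(\la \sqcup \mu\) is involved, with support exactly \(\support{\la} + \support{\mu}\). This is precisely where the hypothesis \(\support{\la} + \support{\mu} \leq n\) is used.

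Next I would treat the equality case, which supplies the uniqueness. Suppose \(\nu\) is involved with \(\support{\nu} \geq \support{\la} + \support{\mu}\). By the bound above \(\support{\nu} = \support{\la} + \support{\mu}\), so both inequalities in the display are equalities. Equality in \(\abs{\support{\sigma} \cup \support{\tau}} = \abs{\support{\sigma}} + \abs{\support{\tau}}\) forces \(\support{\sigma}\) and \(\support{\tau}\) to be disjoint, and equality \(\abs{\support{\rho}} = \abs{\support{\sigma} \cup \support{\tau}}\) forces \(\support{\rho} = \support{\sigma} \cup \support{\tau}\). With disjoint supports, \(\tau\) fixes \(\support{\sigma}\) pointwise and \(\sigma\) fixes \(\support{\tau}\) pointwise, so \(\rho = \sigma\tau\) restricts to \(\sigma\) on \(\support{\sigma}\) and to \(\tau\) on \(\support{\tau}\); that is, \(\rho\) is the permutation obtained by letting \(\sigma\) and \(\tau\) act on their disjoint supports. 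Its cycle type is therefore \(\la \sqcup \mu\), giving \(\nu = \la \sqcup \mu\).

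I do not anticipate a serious obstacle: the whole argument rests on the containment \(\support{\sigma\tau} \subseteq \support{\sigma} \cup \support{\tau}\) together with a careful reading of its two equality conditions. The only point deserving care is the bookkeeping in the equality case, namely confirming that disjoint supports make \(\sigma\tau\) literally the disjoint union of \(\sigma\) and \(\tau\), so that its multiset of cycle lengths is the concatenation of those of \(\la\) and \(\mu\) (with the fixed points suppressed), which is exactly \(\la \sqcup \mu\).
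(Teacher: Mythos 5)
Your proposal is correct and follows essentially the same route as the paper's proof: both rest on the observation that a partition of support at least \(\support{\la} + \support{\mu}\) can only arise when \(\sigma\) and \(\tau\) have disjoint sets of non-fixed points, in which case the product has cycle type \(\la \sqcup \mu\), with existence supplied by the hypothesis \(\support{\la} + \support{\mu} \leq n\). You simply spell out the support-containment bound and its equality conditions in more detail than the paper's terse version.
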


\begin{proof}
Suppose \(\sigma,\tau\) are permutations of cycle type \(\la,\mu\) respectively.
In order for the product \(\sigma\tau\) to have at least \(\support{\la} + \support{\mu}\) non-fixed points, the non-fixed points of \(\sigma\) and \(\tau\) must be disjoint, in which case \(\sigma\) and \(\tau\) commute and the product \(\sigma\tau\) has cycle type \(\la\sqcup\mu\).
Indeed such permutations exist given \(\support{\la} + \support{\mu} \leq n\).
\end{proof}

We require a stronger version of \Cref{lemma:summand_of_greatest_support} in the case of multiplying by a cycle.

\begin{lemma}
\label{lemma:multiplying_cycle_with_partition}
Let \(\la\) be a partition with \(m\) parts (all strictly greater than \(1\)), and let \(r > 1\) be a positive integer such that \(\support{\la} + r \leq n\).
\begin{enumerate}[(i)]
\item
    The unique partition of support \(\support{\la} + r\) involved in the product \(s_{\la} s_{(r)}\) is \(\la \sqcup (r)\).
\item
    The partitions of support \(\support{\la} + r -1\) involved in the product \(s_{\la} s_{(r)}\) are (up to reordering parts) precisely those of the form \((\lambda_1, \ldots, \lambda_{i-1}, \lambda_i + r - 1, \lambda_{i+1}, \ldots, \lambda_{m})\) for some \(1 \leq i \leq m\).
\item
    A partition of support \(\support{\la} + r -2\) involved in the product \(s_{\la} s_{(r)}\) either has \(m+1\) parts strictly greater than \(1\), or is (up to reordering parts) of the form \((\lambda_1, \ldots, \lambda_{i-1}, \lambda_i + \la_j + r - 2, \lambda_{i+1}, \ldots, \la_{j-1}, \la_{j+1}, \ldots, \lambda_{m})\) for some \(1 \leq i < j \leq m\).
\end{enumerate}
\end{lemma}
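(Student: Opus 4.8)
The plan is to unwind the definition of ``involved'': a partition $\nu$ is involved in $s_\la s_{(r)}$ exactly when there exist a permutation $\sigma$ of cycle type $\la$ and an $r$-cycle $\tau$ with $\sigma\tau$ of cycle type $\nu$. Write $S$ and $T$ for the sets of non-fixed points of $\sigma$ and $\tau$, so $\abs{S} = \support{\la}$ and $\abs{T} = r$, and set $\rho = \sigma\tau$. The backbone of the argument is the observation that every point outside $S \cup T$ is fixed by both $\sigma$ and $\tau$, hence by $\rho$; therefore
\[
    \support{\rho} \leq \abs{S \cup T} = \support{\la} + r - \abs{S \cap T}.
\]
Thus the support of the product is governed entirely by the overlap $\abs{S \cap T}$, and the three parts of the lemma correspond to $\abs{S \cap T} = 0, 1, 2$. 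Part~(i) is then immediate: support $\support{\la} + r$ forces $\abs{S \cap T} = 0$, and this case is exactly \Cref{lemma:summand_of_greatest_support} with $\mu = (r)$.

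For part~(ii), support $\support{\la}+r-1$ forces $\abs{S\cap T}=1$, since overlap $0$ gives the strictly larger support $\support{\la}+r$. Writing $\{p\} = S \cap T$, I would first check that the product attains the full support $\support{\la}+r-1$: a short case-check on the three types of point (a point of $S$ outside $T$, a point of $T$ outside $S$, and $p$ itself) shows every point of $S \cup T$ is moved by $\rho$. The $r - 1$ points of $T \setminus\{p\}$ lie outside $S$, so they are fixed by $\sigma$ and hence mutually interchangeable; conjugating by a permutation fixing $S$ pointwise I may put $\tau$ in the normal form $(p\, b_2\, \cdots\, b_r)$ without changing $\sigma$ or the cycle type of $\rho$. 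A direct trace then shows that the cycle of $\sigma$ through $p$, of length $\la_i$ say, swallows $b_2, \dots, b_r$ to become a single cycle of length $\la_i + r - 1$, while every other cycle of $\sigma$ is untouched (its points miss $T$). This simultaneously constructs, for each part $\la_i$, a witness realising $(\dots, \la_i + r - 1, \dots)$, and shows conversely that these are the only partitions of support $\support{\la} + r - 1$ that can occur.

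For part~(iii) the overlap is $\abs{S \cap T} = 2$, say $S \cap T = \{p, p'\}$ (overlaps $0$ and $1$ produce the larger supports handled by parts~(i)--(ii)), and the dichotomy in the statement is precisely that of whether $p$ and $p'$ lie in the same cycle of $\sigma$ or in two different cycles. In the different-cycle case the cycles of lengths $\la_i$ and $\la_j$ through $p$ and $p'$, together with the $r - 2$ new points of $T$, fuse into a single cycle of length $\la_i + \la_j + r - 2$, yielding the displayed merge form (with $m - 1$ parts greater than $1$). In the same-cycle case the one cycle of length $\la_i$ through $p$ and $p'$, together with the $r-2$ new points, reorganises into \emph{two} cycles; since the remaining $m - 1$ cycles of $\sigma$ are again untouched, the resulting partition has $m + 1$ parts greater than $1$.

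The main obstacle is pinning down the exact cycle count in the same-cycle subcase of part~(iii): one must show the active block splits into \emph{exactly} two cycles, independently of how the $r - 2$ interchangeable new points distribute among the two arcs into which $p, p'$ cut both the cycle of $\sigma$ and the cycle $\tau$. I would either normalise the new points as in part~(ii) and trace the finitely many resulting shapes, or---more cleanly and uniformly for all three parts---invoke the Riemann--Hurwitz genus formula $c(\alpha)+c(\beta)+c(\alpha\beta) = N + 2 - 2g$, with $g \in \Z_{\geq 0}$, valid for a pair $\alpha,\beta$ generating a transitive subgroup of $S_N$. Restricting to the active set $D$ of size $N = \la_i + r - 2$ (on which $\langle\sigma,\tau\rangle$ is transitive, as $\tau$ links the new points to the cycle through $p,p'$) and writing $c_D$ for the number of cycles on $D$, one has $c_D(\sigma) = r - 1$ and $c_D(\tau) = \la_i - 1$, so the formula gives $c_D(\rho) = 2 - 2g$; since $c_D(\rho) \geq 1$ this forces $g = 0$ and $c_D(\rho) = 2$ exactly. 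The analogous count on the active sets of part~(ii) and of the different-cycle case of part~(iii) returns $c_D(\rho) = 1$, re-deriving the single-cycle conclusions there.
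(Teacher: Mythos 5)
Your proposal is correct, and its skeleton matches the paper's proof: classify by the overlap \(\abs{S \cap T} \in \set{0,1,2}\), reduce part (i) to \Cref{lemma:summand_of_greatest_support}, and in part (iii) split according to whether the two common moved points lie in the same cycle of \(\sigma\) or in different cycles. Where you genuinely differ is in how the resulting cycle structures are pinned down. The paper treats these as bare-hands observations: a \(\lambda_i\)-cycle times an \(r\)-cycle sharing exactly one moved point is a \((\lambda_i+r-1)\)-cycle, and sharing exactly two moved points it is either a disjoint pair \((a,b)\) with \(a+b=\lambda_i+r-2\) or a \((\lambda_i+r-3)\)-cycle, the latter discarded because its support is too small. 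You instead invoke the Riemann--Hurwitz relation \(c(\alpha)+c(\beta)+c(\alpha\beta)=N+2-2g\) on the active set; your counts (\(c_D(\sigma)=r-1\), \(c_D(\tau)=\lambda_i-1\) in the same-cycle case, and the analogues elsewhere) and the transitivity checks are right, giving \(c_D(\rho)=1\) in part (ii) and in the different-cycle case of (iii), and \(c_D(\rho)=2\) exactly in the same-cycle case. This buys uniformity across all three parts and turns the ``exactly two cycles'' point --- which the paper states as an observation rather than proving --- into a one-line consequence; the cost is an appeal to a nontrivial external result where the paper stays elementary and self-contained (the same dichotomy also follows from the normal-form trace you sketch for part (ii), or from a parity count on the number of cycles). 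One presentational point to tighten: in the same-cycle case your ``two cycles'' could a priori include a fixed point (the paper's \((\lambda_i+r-3)\)-cycle case), which would give only \(m\) parts greater than \(1\); this is excluded because the support hypothesis of part (iii) forces every point of \(S\cup T\) to be moved --- a fact your opening inequality supplies, but which you should invoke explicitly at that step.
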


\begin{proof}
The first part is a special case of \Cref{lemma:summand_of_greatest_support}.
For the second part, observe that the product of two non-identity permutations has exactly one fewer non-fixed point than the sum of their numbers of non-fixed points if and only if the two permutations have exactly one non-fixed point in common.
The product of a \(\la_i\)-cycle and \(r\)-cycle with exactly one non-fixed point in common is a \((\lambda_i+r-1)\)-cycle, yielding the specified form.

For the third part, observe that if the product of two non-identity permutations has exactly two fewer non-fixed points than the sum of their numbers of non-fixed points, then the two permutations have exactly two non-fixed point in common (though not conversely).
If an \(r\)-cycle has two non-fixed points in common with a \(\la_i\)-cycle, then either their product has cycle type \((a,b)\) with \(a+b = \la_i+r-2\) and we are in the case of having \(m+1\) parts strictly greater than \(1\), or the product is a \((\la_i+r-3)\)-cycle and we do not obtain a permutation of the specified support; if an \(r\)-cycle has one non-fixed point in common with a \(\la_i\)-cycle and one non-fixed point in common with a disjoint \(\la_j\)-cycle, the product is an \((\la_i+\la_j+r-2)\)-cycle and we are in the case of the form specified.
\end{proof}

\newcommand{\Zalg}{\mathcal{Z}}

\begin{theorem}
\label{thm:generation_theorem}
Let \(\el \geq 4\) and \(n \geq \el\).
The subalgebra of \(Z(\Q A_n)\) generated by the even \(S_n\)-conjugacy \(\el'\)-class sums contains all even \(S_n\)-conjugacy class sums.
\end{theorem}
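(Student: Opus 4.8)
The plan is to show, by induction on the support $\support{\la}$, that every even $S_n$-class sum $s_\la$ lies in the subalgebra $\Zalg$ generated by the even $\el'$-class sums. I use two facts freely: a product of even class sums is a nonnegative-integer combination of even class sums, and by \Cref{lemma:summand_of_greatest_support} the product $s_\mu s_\nu$ has $s_{\mu\sqcup\nu}$ as its unique term of top support $\support{\mu}+\support{\nu}$ (whenever this is at most $n$), all other terms having strictly smaller support. Consequently, if an even $\la$ splits as a disjoint union $\mu\sqcup\nu$ of two nonempty even subpartitions, then $s_\la=s_\mu s_\nu-(\text{terms of smaller support})$ and the inductive hypothesis gives $s_\la\in\Zalg$; and if $\la$ is an $\el'$-partition it is already a generator.

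These two reductions dispose of almost everything. An even partition possessing an odd part may be reduced by peeling that part off as a block $(o)$ (a lone odd cycle is always an even subpartition), and an all-even partition with at least four parts splits into two even subpartitions; in both cases the pieces have smaller support. Thus the only partitions requiring a direct argument are \emph{(i)} a single odd cycle $(r)$ with $\el\mid r$, which forces $\el$ to be odd, and \emph{(ii)} a pair of even cycles $(e_1,e_2)$ with $\el\mid\operatorname{lcm}(e_1,e_2)$. (Peeling an odd part is legitimate even when $\el\mid o$, because such odd cycles are supplied by case~(i) at their own, smaller, support.)

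Case~(i) is handled with \Cref{lemma:multiplying_cycle_with_partition}: for odd $a,b$ not divisible by $\el$ and with $(a,b)$ an $\el'$-pair, part~(ii) of that lemma shows $s_{(a+b-1)}$ is the unique support-$(a+b-1)$ term of $s_{(a)}s_{(b)}$. If $r<n$, choose $a+b=r+1$; then the top term $s_{(a,b)}$ is a generator, everything below support $r$ is handled inductively, and $s_{(r)}$ is extracted. If $r=n$, take instead $a+b=n+1$: now the two cycles cannot be disjoint, so $s_{(n)}$ is forced to be the unique term of maximal support $n$ and is read off directly. Suitable $a,b$ exist for $\el\geq5$.

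Case~(ii) is the heart of the matter and the main obstacle. A pair $(e_1,e_2)$ of even cycles can never appear as a top term $\mu\sqcup\nu$ of generators, so it must be produced one support level down. Writing $a'=e_1-c+1$ for an odd cycle of length $c$, the product $s_{(a',e_2)}s_{(c)}$ has $s_{(a',e_2)}$ available by induction and, by \Cref{lemma:multiplying_cycle_with_partition}, realizes $(e_1,e_2)$ at support $e_1+e_2$ together with a sibling $(a',e_2+c-1)$ and a single top term $(a',e_2,c)$ at support $e_1+e_2+1$. If only one of $e_1,e_2$ is responsible for $\el\mid\operatorname{lcm}(e_1,e_2)$, a choice of $c$ coprime to $\el$ renders both the sibling and the top term $\el'$-generators and the target falls out immediately. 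The genuine difficulty is when both parts contribute: then the top term necessarily keeps an $\el$-divisible even part, so it is not a generator, and — having $c$ as its only odd part — it admits no decomposition into even subpartitions other than the very product producing the target. Thus it is inextricably coupled to $s_{(e_1,e_2)}$, the splitting reduction fails, and the forced-overlap trick of case~(i) removes the top term only when $e_1+e_2=n$. I would break the coupling with a bounded linear system: the coupled higher sums $(e_1,e_2-c+1,c)$ and $(e_1-c+1,e_2,c)$ occur as the top terms of $s_{(c)}s_{(e_1,e_2-c+1)}$ and $s_{(c)}s_{(e_1-c+1,e_2)}$ and as one-merge terms of the doubly-reduced product $s_{(e_1-c+1,\,e_2-c+1)}s_{(c,c)}$, whose own top term is a generator; in each product every other term is a generator or of smaller support, so each yields one relation among the target and these higher sums, and the system determines $s_{(e_1,e_2)}$. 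The delicate points — where I expect the real work to lie — are to choose the auxiliary cycle lengths so that all byproduct partitions are genuinely $\el'$ (much fussier for composite $\el$, where $\el'$ constrains the lcm rather than single parts) and all supports stay at most $n$, and to check that the matrix of positive-integer structure constants is invertible; the boundary cases $r=n$ and small $n$ will also need separate attention.
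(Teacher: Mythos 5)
Your overall skeleton is the same as the paper's: induction on support, the splitting reduction via \Cref{lemma:summand_of_greatest_support}, reduction to the two hard shapes (a single odd cycle, and two even parts), and your treatment of the single-cycle case matches the paper's almost verbatim. Both gaps lie in the two-even-part case. The first is that your claim for the ``only one part responsible'' case is false when the relevant prime power is $4$. Take $\el=4$ and $\lambda=(6,4)$, so $e_1\equiv 2$ and $e_2\equiv 0 \pmod 4$. The auxiliary cycle length $c$ must be odd, and to keep the reduced part $e_2-c+1$ (which sits inside the top term) indivisible by $4$ you must take $c\equiv 3\pmod 4$; but then the sibling's merged part is $e_1+c-1\equiv 2+3-1\equiv 0\pmod 4$, so the sibling --- here $(8,2)$ --- is never a $4'$-class, for every admissible $c$. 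Reducing $e_1$ instead leaves $e_2\equiv 0$ inside the top term (e.g.\ $(4,4,3)$). So a single product always leaves a second unknown, and nothing ``falls out immediately''. This is exactly why the paper needs two further devices: a secondary induction parameter (at fixed support, two-part partitions are treated in decreasing order of first part, so the sibling $(a+j-1,b-j+1)$ is already known), and the separate product $s_{(b,b)}s_{(a-b+1)}$ for the subcase $a\equiv 0$, $b\not\equiv 0,1$, whose only high term besides the target is the $\el'$-class $(b,b,a-b+1)$. Your proposal has neither, and without one of them the induction does not close even for $\el=4$.

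The second gap is the one you defer: in the ``both parts divisible'' case your three products form a genuinely coupled system, because the target $(e_1,e_2)$ occurs in all three of them --- including the doubly-reduced product $s_{(e_1-c+1,e_2-c+1)}s_{(c,c)}$, since each $c$-cycle can absorb one point of one long cycle --- so extracting $s_{(e_1,e_2)}$ requires a determinant of class-algebra structure constants to be nonzero. That is the crux of the whole theorem, not a routine check: the entries are counts of factorisations that must be computed exactly (naive counts give determinant $-e_1e_2$, which is encouraging, but one must rule out extra factorisation configurations altering it). The paper never faces this question because its auxiliary products are chosen to make the elimination triangular rather than coupled: $s_{(b-j+1,b-j+1,j)}s_{(a-b+j)}$ involves the coupled unknown $(a,b-j+1,j)$ but \emph{not} the target $(a,b)$ (neither of its two-merge terms $(a+b-j,j)$, $(a+j-1,b-j+1)$ can equal $(a,b)$ since $1<j<b$), and its only remaining high term not already known, $(b-j+1,b-j+1,a-b+2j-1)$, is in turn the unique high term of $s_{(b-j+1,b-j+1)}s_{(a-b+2j-1)}$ apart from an $\el'$-class. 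Each relation then contains exactly one new unknown, with positive coefficient, so back-substitution finishes with no invertibility question. A final remark: the ``fussiness'' you anticipate for composite $\el$ evaporates once you fix a single prime power $q\geq 3$ dividing $\el$ (it exists precisely because $\el\geq 4$) and keep every part of every byproduct away from $0$ modulo $q$; this is the paper's device, and it also pinpoints where the argument breaks for $\el=3$.
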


\begin{proof}
Let \(\Zalg\) be the subalgebra generated by the even \(S_n\)-conjugacy \(\el'\)-class sums.
We aim to show \(s_\la \in \Zalg\) for all even partitions \(\la\) by induction on \(\support{\la}\) (primarily) and \(n-\la_1\) (secondarily).
The cases \(0 \leq \support{\la} \leq 3\) are trivial.
Suppose \(\support{\la} \geq 4\), and suppose we have shown that \(s_{\mu} \in \Zalg\) for all even partitions \(\mu\) of support strictly less than \(\la\) and for all partitions \(\mu\) of support equal to \(\la\) with \(\mu_1 > \la_1\).

The order of an element of \(S_n\) is the lowest common multiple of the numbers in its cycle type.
Thus if \(\el\) does not divide the lowest common multiple of the parts of \(\la\), the claim holds trivially.
Otherwise, let \(q \geq 3\) be a prime power dividing \(\el\), and note that \(q\) divides at least one part of \(\la\).

We consider three cases depending on the number and parity of the parts of \(\la\) (following our convention on omitting parts equal to \(1\)).

\proofpart{\(\la\) has at least three parts; or \(\la\) has exactly two parts, both odd}

We can write \(\la = \mu \sqcup \nu\) where \(\mu\) and \(\nu\) are nonempty even partitions with support strictly less than \(\support{\la}\).
Then \(s_\mu s_\nu \in \Zalg\) by the inductive hypothesis, and by \Cref{lemma:summand_of_greatest_support} the only partition of support at least \(\support{\la}\) involved in the product \(s_\mu s_\nu\) is \(\la\).
Using the inductive hypothesis again, all summands except \(s_\la\) lie in \(\Zalg\), and hence so does \(s_\la\).

\addvspace{\vsmallskipamount}

For the remaining two cases, choose a positive odd integer \(j\) such that  \(j \not\equiv 0\) and \(j \not\equiv 1\) (mod \(q\)), and such that \(j < a\) for every part \(a\) of \(\la\) which is divisible by \(q\).
If \(q>3\), then \(j=3\) suits; if \(q=3\), then \(j=5\) suits (noting that in the remaining two cases \(\la\) has either one odd part greater than \(4\), or two even parts).

\proofpart{\(\la\) has exactly one part, necessarily odd}

Write \(\la=(a)\), where \(a\) is odd and divisible by \(q\).
The partitions \((a{-}j{+}1)\) and \((j)\) are even and have support strictly less than \(\support{\la}\), so \(s_{(a{-}j{+}1)} s_{(j)} \in \Zalg\) by the inductive hypothesis.
By \Cref{lemma:multiplying_cycle_with_partition}, the only partitions involved in this product of support at least \(\support{\la}\) are \(\la\) and \((a{-}j{+}1,j)\).
Since \(a{-}j{+}1\) and \(j\) are not divisible by \(q\), we have \(s_{(a{-}j{+}1,j)} \in \Zalg\) trivially, and hence (using the inductive hypothesis again) we conclude \(s_{\la} \in \Zalg\).

\proofpart{\(\la\) has exactly two parts, both even}

Write \(\la = (a,b)\), where \(a\) and \(b\) are even and at least one is divisible by \(q\).
Suppose \(a \geq b\).
We consider four cases depending on the residues of \(a\) and \(b\) modulo \(q\).

\proofpart{\;\(\bullet\)\; \(a \equiv 0\), \(b \not\equiv 0,\!1\) (mod \(q\))}

Since \(a \not\equiv b\), we have \(a > b\).
The product \(s_{(b,b)} s_{(a{-}b{+}1)}\) lies in \(\Zalg\) by the inductive hypothesis.
Other than \(\la\), the only partition of support at least \(\support{\la}\) involved in the product is \((b, b, a{-}b{+}1)\).
This partition has no part divisible by \(q\), so \(s_{(b, b, a{-}b{+}1)} \in \Zalg\), and hence \(s_\la \in \Zalg\).

\proofpart{\;\(\bullet\)\; \(a \equiv 0\), \(b \equiv 1\) (mod \(q\))}

The product \(s_{(a{-}j{+}1, b)} s_{(j)}\) lies in \(\Zalg\) by the inductive hypothesis.
Other than \(\la\), the only partitions of support at least \(\support{\la}\) involved in the product are \((a{-}j{+}1, b{+}j{-}1)\) and \((a{-}j{+}1, b, j)\).
Neither of these partitions has a part divisible by \(q\), so the corresponding class sums lie in \(\Zalg\), and thus \(s_\la \in \Zalg\).

\proofpart{\;\(\bullet\)\; \(a \not\equiv 0\), \(b \equiv 0\) (mod \(q\))}

The product \(s_{(a, b{-}j{+}1)} s_{(j)}\) lies in \(\Zalg\) by the inductive hypothesis.
Other than \(\la\), the only partitions of support at least \(\support{\la}\) involved in this product are \((a, b{-}j{+}1, j)\) and \((a{+}j{-}1, b{-}j{+}1)\).
The former partition has no part divisible by \(q\), so \(s_{(a, b{-}j{+}1, j)} \in \Zalg\); the latter has support equal to that of \(\la\) but greater first part, so \(s_{(a{+}j{-}1, b{-}j{+}1)} \in \Zalg\) by the inductive hypothesis.
Thus \(s_\la \in \Zalg\).

\proofpart{\;\(\bullet\)\; \(a \equiv b \equiv 0\) (mod \(q\))}

As in the previous case, the product \(s_{(a, b{-}j{+}1)} s_{(j)}\) lies in \(\Zalg\) by the inductive hypothesis, and other than \(\la\) the only partitions of support at least \(\support{\la}\) involved in the product are \((a{+}j{-}1, b{-}j{+}1)\) and \((a, b{-}j{+}1, j)\).
The former has no part divisible by \(q\), so to deduce \(s_\la \in \Zalg\) it suffices to show \(s_{(a, b{-}j{+}1, j)} \in \Zalg\).

The product \(s_{(b{-}j{+}1, b{-}j{+}1, j)} s_{(a{-}b{+}j)}\) lies in \(\Zalg\) by the inductive hypothesis.
Other than the partition of interest \((a, b{-}j{+}1, j)\), by \Cref{lemma:multiplying_cycle_with_partition} the only partitions of support at least \(\support{\la}\) involved in the product are
\begin{align*}
\begin{gathered}
    (b{-}j{+}1,\, b{-}j{+}1,\, j,\, a{-}b{+}j), \\
    (b{-}j{+}1,\, b{-}j{+}1,\, a{-}b{+}2j{-}1),
\end{gathered}
&&
\begin{gathered}
    (a{+}b{-}j,\, j), \\
    (a{+}j{-}1,\, b{-}j{+}1),
\end{gathered}
\end{align*}
or are of support equal to \(\support{\la}\) and have three parts greater than \(1\).
Partitions of support \(\support{\la}\) with three (or more) parts greater than \(1\) have already been shown to lie in \(\Zalg\) during the first case of the inductive step, while three of the four listed partitions have no part divisible by \(q\).
It now suffices to show \( s_{(b{-}j{+}1, b{-}j{+}1, a{-}b{+}2j{-}1)} \in \Zalg\).

The product \(s_{(b{-}j{+}1, b{-}j{+}1)} s_{(a{-}b{+}2j{-}1)}\) lies in \(\Zalg\) by the inductive hypothesis.
Other than \((b{-}j{+}1, b{-}j{+}1, a{-}b{+}2j{-}1)\), the only partition of support at least \(\support{\la}\) involved in this product is \((a{+}j{-}1, b{-}j{+}1)\).
This partition has no part divisible by \(q\), so its class sum lies in \(\Zalg\), and thus \(s_{(b{-}j{+}1, b{-}j{+}1, a{-}b{+}2j{-}1)} \in \Zalg\). %as required.
\end{proof}

\subsection{Centre of the group algebra of the double cover}

Working now with \(\cover{S}_n\) and \(\cover{A}_n\), the classes which split from \(S_n\) to \(\cover{S}_n\) require further attention.
Recall that \(\projmap \colon \cover{S}_n \to S_n\) denotes the projection map; that \(\coverCcl_\la\) denotes the preimage of \(\Ccl_\la\) under \(\projmap\); and that for \(\la\) even, \(\coverCcl_\la\) splits if and only if \(\la\) has all parts odd, in which case its components are denoted \(\coverCcl^\uno_\la\) and \(\coverCcl^\zed_\la\).
Let \(\cover{s}_\la\) denote the sum of \(\coverCcl_\la\), and if \(\coverCcl_\la\) splits let \(\cover{s}^\uno_\la\) and \(\cover{s}^\zed_\la\) denote the sums of the components. 

\begin{lemma}
\label{lemma:product_of_cycles_conjugate_to_nice_expression}
Let \(a,b \geq 2\), let \(\rho = (1\ 2\ \cdots\ {a{+}b{-}1}) \in S_n\), and let \(\sigma, \tau \in S_n\) be \(a\)- and \(b\)-cycles such that \(\sigma\tau = \rho\).
Then there exists \(k \in \Z\) such that \(\rho^k \sigma \rho^{-k} = (1\ 2\ \cdots\ a)\) and \(\rho^k \tau \rho^{-k} = (a\ \ a{+}1\ \cdots\ a{+}b{-}1)\).
\end{lemma}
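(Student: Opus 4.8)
The plan is to reduce the whole statement to understanding the single factor \(\tau\). Conjugating \(\sigma\tau = \rho\) by any power of \(\rho\) fixes the right-hand side, so for every \(k\) the conjugated pair \(\rho^k\sigma\rho^{-k}\), \(\rho^k\tau\rho^{-k}\) again has product \(\rho\); and since \(\sigma = \rho\tau^{-1}\), the factor \(\sigma\) is determined by \(\tau\). Writing \(\sigma_0 = (1\ 2\ \cdots\ a)\) and \(\tau_0 = (a\ \ a{+}1\ \cdots\ a{+}b{-}1)\), one checks \(\sigma_0\tau_0 = \rho\), whence \(\sigma_0 = \rho\tau_0^{-1}\). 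Therefore it suffices to produce \(k\) with \(\rho^k\tau\rho^{-k} = \tau_0\): the companion equality \(\rho^k\sigma\rho^{-k} = \sigma_0\) then follows formally, since \(\rho^k\sigma\rho^{-k} = \rho(\rho^k\tau\rho^{-k})^{-1} = \rho\tau_0^{-1} = \sigma_0\), using that \(\rho\) commutes with \(\rho^k\). So the lemma reduces to showing that \(\tau\) is a \emph{\(\rho\)-arc}: its support is a cyclically consecutive block, cycled in \(\rho\)-order.

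First I would pin down the supports. Put \(N = a{+}b{-}1\) and let \(A\), \(B\) be the supports of \(\sigma\), \(\tau\). Since \(\rho\) has support \(\set{1,\ldots,N}\) and \(\supp(\sigma\tau) \subseteq A \cup B\), we have \(\set{1,\ldots,N} \subseteq A\cup B\); as \(\abs{A} + \abs{B} = a+b = N+1\), the two supports overlap in at most one point. They cannot be disjoint, for then \(\sigma\) and \(\tau\) would commute and \(\sigma\tau\) would have cycle type \((a,b)\) rather than being one \(N\)-cycle. Hence \(A\cap B = \set{c}\) for a single point \(c\), and comparing cardinalities forces \(A\cup B = \set{1,\ldots,N}\) exactly, so neither factor moves a point outside \(\set{1,\ldots,N}\).

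The crux is to locate \(B\). Because \(A\) and \(B\) meet only in \(c\), the cycle \(\sigma\) fixes every point of \(B\setminus\set c\); rewriting this fixed-point condition via \(\sigma = \rho\tau^{-1}\) shows that \(\tau^{-1}(x) = \rho^{-1}(x) = x-1\) for each \(x \in B\) with \(x \neq c\) (indices read mod \(N\)). In other words \(\tau\) follows \(\rho\) backwards at every point of its support except \(c\). I would then trace the orbit of \(c\) under \(\tau^{-1}\): after the first step every subsequent step decreases the index by \(1\) cyclically, and since \(\tau\) is a single \(b\)-cycle the orbit returns to \(c\) after exactly \(b\) steps. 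This pins down \(B = \set{c, c{+}1, \ldots, c{+}b{-}1}\) and \(\tau = (c\ \ c{+}1\ \cdots\ c{+}b{-}1)\), the required \(\rho\)-arc; taking \(k \equiv a-c \pmod N\) then carries \(\tau\) to \(\tau_0\) and finishes the argument.

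The step I expect to demand the most care is this backward trace. The point \(c\) is the unique place where \(\tau\) fails to agree with \(\rho\), so the difficulty is not combinatorial depth but disciplined cyclic (mod \(N\)) bookkeeping around that single exception — making sure the wrap-around closes up after precisely \(b\) steps and returns to \(c\), and that the chosen \(k\) lands both factors on the intended standard cycles simultaneously.
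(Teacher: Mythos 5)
Your proof is correct and takes essentially the same route as the paper's: both arguments identify the unique common non-fixed point of \(\sigma\) and \(\tau\), use the relation \(\sigma\tau = \rho\) to trace the orbit of that point and conclude that \(\tau\) is a cyclically consecutive arc of \(\rho\), and then recover \(\sigma\) formally from \(\sigma = \rho\tau^{-1}\). The only cosmetic differences are that you trace the orbit backwards via \(\tau^{-1}\) where the paper traces forwards via \(\tau\), and that you spell out the support-counting facts (unique intersection point, supports filling \(\set{1,\ldots,a{+}b{-}1}\)) which the paper asserts without proof.
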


\begin{proof}
All non-fixed points of the relevant permutations are in the range \(1\) to \(a{+}b{-}1\), so write integers modulo \(a{+}b{-}1\).
The permutations \(\sigma\) and \(\tau\) have a unique common non-fixed point; call it \(r\).
Then for \(1 \leq i < b\), we have that \(\tau^i(r)\) is fixed by \(\sigma\), and so \(\tau^i(r) = \sigma\tau^i(r) = \rho\tau^{i-1}(r) = \tau^{i-1}(r)+1\).
Thus 
\begin{gather*}
\tau = (r\ \ r{+}1\ \cdots\ r{+}b{-}1) = \rho^{r-a}(a\ \ a{+}1\ \cdots\ a{+}b{-}1)\rho^{a-r} \\
\intertext{and, since \(\sigma = \rho \tau^{-1}\), hence}
\sigma = \rho^{r-a} \big( \,\rho \,(a\ \ a{+}b{-}1\ \ a{+}b{-}2 \ \cdots\ a{+}1)\, \big) \rho^{a-r} = \rho^{r-a} (1\ 2\ \cdots\ a) \rho^{a-r}. \qedhere
\end{gather*}
\end{proof}

\begin{lemma}
\label{lemma:split_ccl_product}
Let \(\la, \mu, \nu\) be partitions with all parts odd such that \(\support{\nu} \geq \support{\la} + \support{\mu} - 1\).
All elements of \(\cover{S}_n\) of cycle type \(\nu\) appearing in the product \(\cover{s}_{\la}^\uno \cover{s}_{\mu}^\uno\) are conjugate in \(\cover{S}_n\).
\end{lemma}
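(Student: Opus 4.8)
The plan is to exploit that \(\cover{s}_\la^\uno \cover{s}_\mu^\uno\) lies in \(Z(\Q\cover{S}_n)\), so the set of its constituents of cycle type \(\nu\) is a union of \(\cover{S}_n\)-conjugacy classes; it therefore suffices to show that any two products \(\tilde\sigma\tilde\tau\) and \(\tilde\sigma'\tilde\tau'\) of cycle type \(\nu\), with \(\tilde\sigma,\tilde\sigma' \in \coverCcl_\la^\uno\) and \(\tilde\tau,\tilde\tau' \in \coverCcl_\mu^\uno\), are conjugate. First I would extract the geometry forced by the hypothesis: writing \(\sigma = \projmap(\tilde\sigma)\) and \(\tau = \projmap(\tilde\tau)\), a count of non-fixed points shows that \(\support{\nu} \geq \support{\la} + \support{\mu} - 1\) forces the supports of \(\sigma\) and \(\tau\) to meet in at most one point. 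This gives two cases: either the supports are disjoint, \(\sigma\) and \(\tau\) commute, and \(\rho := \sigma\tau\) has cycle type \(\nu = \la \sqcup \mu\) (\Cref{lemma:summand_of_greatest_support}); or they share exactly one point, and then exactly one \(a\)-cycle of \(\sigma\) and one \(b\)-cycle of \(\tau\) merge into a single \((a{+}b{-}1)\)-cycle of \(\rho\) while all remaining cycles stay disjoint.

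In the disjoint case I would argue that any two admissible pairs \((\sigma,\tau)\) and \((\sigma',\tau')\) are \emph{simultaneously} \(S_n\)-conjugate: since the two supports are disjoint, a length-preserving bijection of non-fixed points sending the cycles of \(\sigma\) to those of \(\sigma'\) and the cycles of \(\tau\) to those of \(\tau'\) exists. Lifting the conjugator to \(\tilde h \in \cover{S}_n\), we have \(\tilde h \tilde\sigma\tilde\tau \tilde h^{-1} = (\tilde h\tilde\sigma\tilde h^{-1})(\tilde h\tilde\tau\tilde h^{-1})\); since conjugation preserves conjugacy classes, \(\tilde h\tilde\sigma\tilde h^{-1} \in \coverCcl_\la^\uno\) is the unique lift of \(\sigma'\) in that component, hence equals \(\tilde\sigma'\), and likewise \(\tilde h\tilde\tau\tilde h^{-1} = \tilde\tau'\). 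Thus the conjugate product is exactly \(\tilde\sigma'\tilde\tau'\), and all disjoint-case products are conjugate.

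For the merge case I would first conjugate the whole configuration into a fixed standard position for \(\rho\), with the merged cycle a standard \((a{+}b{-}1)\)-cycle \((1\ 2\ \cdots\ a{+}b{-}1)\) and the remaining cycles standard and disjoint from it; as in the disjoint case this carries the product to the product of the corresponding distinguished lifts. It then remains to remove the dependence on how the merged cycle splits, and here I would apply \Cref{lemma:product_of_cycles_conjugate_to_nice_expression}: conjugating by a power of the merged cycle, which centralises \(\rho\) and has support only on the merge block (so it fixes both \(\rho\) and the disjoint cycles, and since \(\coverCcl_\nu\) splits its lift fixes the product lift exactly), puts the interacting factors into standard form \((1\ \cdots\ a)\) and \((a\ \cdots\ a{+}b{-}1)\). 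The key mechanism is then the collapse identity on the merge block,
\[
    (t_1 \cdots t_{a-1})(t_a \cdots t_{a+b-2}) = t_1 \cdots t_{a+b-2},
\]
which is independent of the split point \(a\), and so does not remember which of \(\la,\mu\) contributed which of the two interacting factors. Consequently every merge-case product standardises to a single fixed element, and all such products are conjugate.

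The main obstacle throughout is bookkeeping of the central involution \(z\). Because all parts of \(\nu\) are odd, the class \(\coverCcl_\nu\) splits (\Cref{subsection:chars_of_double_covers}), so the two lifts of a given permutation are \emph{non}-conjugate; I must therefore guarantee that every product lands in one component, rather than meeting both \(\coverCcl_\nu^\uno\) and \(\coverCcl_\nu^\zed\). This is precisely why in the merge case I conjugate by a power of the product (fixing the product lift on the nose, by the splitting of \(\coverCcl_\nu\)) rather than by an arbitrary element, and why the collapse identity must be asserted at the level of group elements and not merely their images in \(S_n\). The residual signs that remain to be controlled are those produced when the standardising conjugations shift a block of generators, governed by \(t_j^2 = z\), \((t_jt_{j+1})^3 = z\) and \(t_jt_k = z t_kt_j\) for \(\abs{j-k}>1\); the technical heart of the argument is to check that these shift-signs are trivial, so that the distinguished lifts of the standard cycles are exactly the consecutive words \(t_1 \cdots t_{k-1}\) to which the collapse identity applies, and that the all-odd hypothesis keeps these choices globally consistent.
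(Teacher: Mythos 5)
Your disjoint case is complete and correct, and it rests on the same mechanism as the paper's (the paper builds a single involution in \(S_n\) conjugating both factors and lifts it; you conjugate the lifts directly and use that conjugation preserves the two components of a split class). The genuine gap is in the merge case, and it is exactly the step you defer: the ``shift-sign'' verification is not a formality to be checked at the end, it is the entire mathematical content of this case, and as you have formulated it, it is not even a well-posed goal. The labels \(\uno\) and \(\zed\) on the components of a split class are assigned arbitrarily, so there is nothing to check in the assertion ``the distinguished lift of the standard \(k\)-cycle is exactly \(t_1\cdots t_{k-1}\)''. What your argument actually needs is a consistency statement across different standard configurations: for a fixed all-odd cycle type, the product-of-consecutive-words lift of every permutation of that type whose cycles occupy consecutive blocks lies in one and the same \(\cover{S}_n\)-conjugacy class, regardless of which cycle plays the role of the merging cycle and of where the spectator blocks sit. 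Only then do the \(z\)-discrepancies between your word-lifts and the class-distinguished lifts cancel when you compare the two factorisations through the collapse identity; without it you have proved only that two particular words are equal, which says nothing about the products of the distinguished lifts, and an uncontrolled sign here is precisely the difference between the lemma and its negation.

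The consistency statement is true, so your route can be completed. One clean way is to embed \(\cover{S}_n\) into the Clifford algebra by \(t_j \mapsto 2^{-1/2}(e_j - e_{j+1})\), \(z \mapsto -1\): conjugation by any element of \(\cover{S}_n\) preserves the scalar component, which equals \(2^{-(k-1)/2} > 0\) on every consecutive word \(t_i t_{i+1}\cdots t_{i+k-2}\) and is multiplicative over disjoint supports, so all your standard word-lifts of a given cycle type have positive scalar part and hence lie in a single class. Note the contrast with the paper, which never confronts signs: it first reduces to \(\projmap(gh)=\projmap(g'h')\), then produces one permutation of \(S_n\) conjugating \(\projmap(g)\) to \(\projmap(g')\) and \(\projmap(h)\) to \(\projmap(h')\) simultaneously (via \Cref{lemma:product_of_cycles_conjugate_to_nice_expression}), so lifting this single conjugator carries \(gh\) to \(g'h'\) on the nose. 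On the other hand, your collapse identity does not care which factor contributed the \(a\)-cycle and which the \(b\)-cycle, whereas a simultaneous conjugator cannot exist when the merge is mixed (it would have to carry a cycle of length \(a\) to one of length \(b\)) -- for instance \(\la = \mu = (5,3)\), \(\nu = (7,5,3)\), where both merge patterns occur -- so a completed version of your argument reaches configurations that the simultaneous-conjugation strategy does not. As written, though, the merge case is unproven.
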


\begin{proof}
Suppose \(g,g' \in \coverCcl^\uno_{\la}\) and \(h,h' \in \coverCcl^\uno_\mu\) are such that \(gh\) and \(g'h'\) are of cycle type \(\nu\).
We must show that \(gh\) is conjugate to \(g'h'\).
Since an element of a conjugacy class appears in a product of class sums if and only if all the elements of that class do (and since an element \(x\) of a split class is not conjugate to \(zx\)) we may assume furthermore that \(\projmap(gh)=\projmap(g'h')\) (that is, that \(g'h' \in \set{gh, zgh}\)).

Our strategy is to show that \(\projmap(g),\projmap(g')\) and \(\projmap(h),\projmap(h')\) are conjugate by the same conjugating element of \(S_n\).
Supposing this is done, then there exists \(x \in \cover{S}_n\) such that \(\projmap(x^{-1}gx) = \projmap(g')\) and \(\projmap(x^{-1}hx) = \projmap(h')\); since \(g,g'\) and \(h,h'\) are conjugate in \(\cover{S}_n\), this implies \(x^{-1}gx = g'\) and \(x^{-1}hx=h'\) and hence \(g'h' = x^{-1}ghx\) as required.
There are two cases depending on the support of \(\nu\), which determines the number of non-fixed points that \(\projmap(g)\) and \(\projmap(h)\) (and \(\projmap(g')\) and \(\projmap(h')\)) have in common.

If \(\support{\nu} = \support{\la} + \support{\mu}\), there are no non-fixed points in common.
Then the factorisations \(\projmap(g)\projmap(h)\) and \(\projmap(g')\projmap(h')\) are both decompositions of \(\projmap(gh) = \projmap(g'h')\) into disjoint cycles.
Such a decomposition is unique up to reordering, so \(\projmap(g)\) and \(\projmap(g')\) are conjugate by a permutation defined as follows: for each cycle in \(\projmap(g)\) but not \(\projmap(g')\) (hence in \(\projmap(h')\) but not \(\projmap(h)\)), choose a cycle of the same length in \(\projmap(g')\) but not \(\projmap(g)\) (hence in  \(\projmap(h)\) but not \(\projmap(h')\)), and swap the entries of these cycles (respecting the cyclic ordering).
Furthermore, \(\projmap(h)\) and \(\projmap(h')\) are conjugate by the inverse of this permutation.
But the permutation described is a product of disjoint transpositions, so is its own inverse.

If \(\support{\nu} = \support{\la} + \support{\mu} - 1\), there is exactly one non-fixed point in common.
Let \(a,b\) be the lengths of the cycles in \(\projmap(g), \projmap(h)\) containing this non-fixed point.
Without loss of generality, suppose these cycles are \((1\ 2\ \cdots a)\) and \((a\ \ a{+}1\ \cdots \ a{+}b{-}1)\), whose product is \(\rho = (1\ 2\ \cdots\ a{+}b{-}1)\).
Let \(\sigma\) and \(\tau\) be the \(a\)- and \(b\)-cycles in \(\projmap(g')\) and \(\projmap(h')\) whose product is an \((a{+}b{-}1)\)-cycle \(\pi\).
Since \(\projmap(gh) = \projmap(g'h')\),
either \(\pi = \rho\) or \(\pi\) and \(\rho\) are disjoint.
If \(\pi = \rho\), apply \Cref{lemma:product_of_cycles_conjugate_to_nice_expression} to find a permutation by which \(\sigma\) and \(\tau\) are conjugate to \((1\ 2\ \cdots\ a)\) and \((a\ \ a{+}1\ \cdots \ a{+}b{-}1)\), and which fixes integers greater than \(a{+}b{-}1\).
If \(\pi\) and \(\rho\) are disjoint, consider the product of disjoint transpositions swapping the entries of \(\pi\) and \(\rho\) (respecting the cyclic ordering).
In either case, \(\projmap(g')\) and \(\projmap(h')\) are conjugate by the chosen permutation to \(\projmap(g)\) and \(\projmap(h)\) respectively, as required.
\end{proof}

\begin{theorem}
\label{thm:double_cover_of_S_n_generation_theorem}
Let \(\el \geq 5\) be odd and let \(n \geq \el\).
The subalgebra of \(Z(\Q \cover{A}_n)\) generated by the even \(\cover{S}_n\)-conjugacy \(\el'\)-class sums contains all even \(\cover{S}_n\)-conjugacy class sums.
\end{theorem}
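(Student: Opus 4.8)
The plan is to proceed in two stages, mirroring the structure of the proof of \Cref{thm:generation_theorem} but now keeping track of the splitting of classes. Write \(\Zalg\) for the subalgebra of \(Z(\Q\cover{A}_n)\) generated by the even \(\cover{S}_n\)-conjugacy \(\el'\)-class sums. I would first show that every \emph{full}-preimage sum \(\cover{s}_\la\) with \(\la\) even lies in \(\Zalg\), and only afterwards peel off the individual components \(\cover{s}^\uno_\la, \cover{s}^\zed_\la\) of the classes that split. The essential new ingredient for the second stage is \Cref{lemma:split_ccl_product}, which confines the top-support contribution of a product of split class sums to a single \(\cover{S}_n\)-conjugacy class.

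For the first stage, observe that for even \(\la\) the full-preimage sum \(\cover{s}_\la\) already lies in \(\Zalg\) whenever \(\la\) is an \(\el'\)-partition: it is a single generator if \(\coverCcl_\la\) does not split, and the sum \(\cover{s}^\uno_\la + \cover{s}^\zed_\la\) of two generators if it does. Moreover every \(\cover{s}_\la\) is \(z\)-invariant, so any product \(\cover{s}_\mu\cover{s}_\nu\) of full-preimage sums is again \(z\)-invariant and hence a nonnegative combination of full-preimage sums \(\cover{s}_\rho\); since the cycle type of a product \(gh\) equals that of \(\projmap(g)\projmap(h)\), exactly the same partitions \(\rho\) occur, at exactly the same supports, as in the \(S_n\) product \(s_\mu s_\nu\). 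Thus the entire induction of \Cref{thm:generation_theorem}, which relies only on the support bookkeeping of \Cref{lemma:summand_of_greatest_support,lemma:multiplying_cycle_with_partition}, transfers verbatim with \(\cover{s}\) in place of \(s\) and yields \(\cover{s}_\la \in \Zalg\) for every even \(\la\). (Here \(\el \geq 5 > 4\), so \Cref{thm:generation_theorem} and its argument apply, and \(\el\) odd supplies the odd prime power \(q \geq 3\) used there.)

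For the second stage I would fix an even partition \(\la\) all of whose parts are odd, the only even classes that split, and prove \(\cover{s}^\uno_\la \in \Zalg\) by induction on \(\support{\la}\); then \(\cover{s}^\zed_\la = \cover{s}_\la - \cover{s}^\uno_\la \in \Zalg\) by the first stage. If \(\la\) has at least two parts, write \(\la = \mu \sqcup \nu\) with \(\mu,\nu\) nonempty, hence all-odd, even, and of strictly smaller support, and consider \(\cover{s}^\uno_\mu\cover{s}^\uno_\nu \in \Zalg\). By \Cref{lemma:summand_of_greatest_support} the only cycle type of support \(\geq \support{\la}\) occurring is \(\la\), and by \Cref{lemma:split_ccl_product} all elements of cycle type \(\la\) that occur are \(\cover{S}_n\)-conjugate, so the top term is \(c\,\cover{s}^\uno_\la\) for a single component and some \(c > 0\); every remaining summand has smaller support and so lies in \(\Zalg\) by induction (and the first stage), giving \(\cover{s}^\uno_\la \in \Zalg\). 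If instead \(\la = (a)\) is a single odd cycle with \(\el \mid a\), I would choose, as in \Cref{thm:generation_theorem}, an odd \(j < a\) with \(j \not\equiv 0,1 \pmod q\) for a prime power \(q \geq 3\) dividing \(\el\) with \(q \mid a\); then \(\cover{s}^\uno_{(a-j+1)}\cover{s}^\uno_{(j)} \in \Zalg\), and by \Cref{lemma:multiplying_cycle_with_partition} the only summands of support \(\geq a\) are \((a-j+1,j)\) at support \(a+1\) and the target \((a)\) at support \(a\). The class of \((a-j+1,j)\) is an \(\el'\)-class, so its entire contribution lies in \(\Zalg\); the target \((a)\) is all-odd, so \Cref{lemma:split_ccl_product} forces its support-\(a\) contribution to be \(c\,\cover{s}^\uno_{(a)}\) with \(c > 0\). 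Clearing the \((a-j+1,j)\) term and the lower-support terms then yields \(\cover{s}^\uno_{(a)} \in \Zalg\).

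I expect the crux to be precisely the splitting bookkeeping of the second stage: one must know that the high-support part of a product of split class sums does not smear across both halves of a split target class, for otherwise it would only reproduce the full sum \(\cover{s}_\la\) already obtained in the first stage and yield nothing new. This is exactly the content of \Cref{lemma:split_ccl_product}, and it is the reason the argument needs \(\el\) odd: only then can the whole construction be carried out within all-odd-parts partitions, where the even classes split and \Cref{lemma:split_ccl_product} applies. The remaining points are routine, namely that the structure constants \(c\) are strictly positive (equivalently, that factorisations realising the top cycle type exist, as in \Cref{lemma:summand_of_greatest_support,lemma:multiplying_cycle_with_partition}), and that every lower-support summand is even and so is covered by the inductive hypothesis together with the first stage.
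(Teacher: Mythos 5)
Your proposal is correct and takes essentially the same approach as the paper: the paper likewise reruns the induction of \Cref{thm:generation_theorem} with \(\cover{S}_n\)-conjugacy class sums (using that for odd \(\el\) a class of \(\cover{S}_n\) is \(\el'\) exactly when its image in \(S_n\) is) and invokes \Cref{lemma:split_ccl_product} precisely in the steps where \(\la\) has all parts odd, so as to obtain the individual split class sums rather than just the full preimage sums. Your only departure is organisational --- you first obtain all full preimage sums via the \(z\)-invariance/projection transfer and then peel off the split components in a second induction, whereas the paper folds both into a single pass --- and this does not change the substance of the argument.
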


\begin{proof}
The order of an element of \(\cover{S}_n\) is either the same as or twice that of its image in \(S_n\).
Thus for \(\el\) odd, an \(\cover{S}_n\)-class is \(\el'\) if and only if its image in \(S_n\) is (but this can be false for \(\el\) even).

With this in mind, the proof of the theorem (with the restriction to \(\el\) odd) is essentially the same as that of \Cref{thm:generation_theorem}, replacing all \(S_n\)-conjugacy class sums with \(\cover{S}_n\)-conjugacy class sums.
The only addition is to use \Cref{lemma:split_ccl_product} in the steps where \(\la\) has all parts odd, in order to deduce that we obtain the split class sums.
\end{proof}

\section{Proofs of the main theorems}
\label{section:main_proofs}

We now deduce our main theorems.
The first main theorem is that irreducible characters of \(A_n\) are uniquely determined by their values on \(\el'\)-classes except for the pairs labelled by self-conjugate partitions with a principal hook length divisible by \(\el\).
The case \(\el=2\) is covered by \cite[Theorem~3.2.1]{wildon2008distinctrows}; the following proof deals with the case \(\el \geq 4\).

\begin{proof}[Proof of \Cref{thm:maintheorem:A_n}]
Suppose that \(\chi\) and \(\psi\) are irreducible characters of \(A_n\) which agree on the \(\el'\)-classes of \(A_n\).
Let \(\U\) be the set of even \(\el'\)-classes of \(S_n\).
By \Cref{thm:generation_theorem}, the algebra generated by \(\setbuild{s_\Dcl}{\Dcl \in \U}\) contains all even \(S_n\)-conjugacy class sums.
By hypothesis, \(\chi(\Dcl) = \psi(\Dcl)\) for all \(\Dcl \in \U\).
Thus by \Cref{prop:distinguishing_up_to_conjugacy}, \(\chi\) and \(\psi\) are \(S_n\)-conjugate.
The \(S_n\)-conjugate characters of \(A_n\) are precisely the pairs \((\chi^{\la^\plus}, \chi^{\la^\minus})\), where \(\la = \la'\), and these characters agree on \(\el'\)-classes if and only if \(\la\) has a principal hook length divisible by \(\el\) (which is necessarily odd).
\end{proof}

Another consequence of \Cref{thm:generation_theorem} is a sufficient condition in terms of character values for two partitions to be conjugate.
It is elementary that partition conjugacy can be determined by examining character values on \emph{all} even classes; it is sufficient to examine the (even) \(2'\)-classes by \cite[Corollary 2.2.4]{wildon2008distinctrows}.
We obtain the following analogue for integers at least \(4\).

\begin{corollary}
Let \(\el \geq 4\). %and \(n \geq \el\).
Let \(\lambda\) and \(\mu\) be distinct partitions of \(n\).
The characters \(\chi^\lambda\) and \(\chi^\mu\) agree on all even \(\el'\)-classes of \(S_n\) if and only if \(\la\) and \(\mu\) are conjugate.
\end{corollary}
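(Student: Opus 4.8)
The plan is to prove the two implications separately, handling the forward direction by upgrading agreement on even \(\el'\)-classes to agreement on \emph{all} even classes, and then appealing to the branching behaviour from \(S_n\) to \(A_n\). The reverse implication is immediate: if \(\la\) and \(\mu\) are conjugate then \(\chi^\mu = \chi^{\la'}\) is the product of \(\chi^\la\) with the sign character \(\operatorname{sgn}\), which is trivial on even permutations, so \(\chi^\la\) and \(\chi^\mu\) agree on every even class and a fortiori on every even \(\el'\)-class.

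For the forward implication, I would suppose \(\chi^\la\) and \(\chi^\mu\) agree on all even \(\el'\)-classes and work in the regime \(n \geq \el\) required by \Cref{thm:generation_theorem}. The first observation is that the identity class is itself an even \(\el'\)-class (the identity is even and has order \(1\)), so the hypothesis already forces \(\chi^\la(1) = \chi^\mu(1)\); thus \(\chi^\la\) and \(\chi^\mu\) have equal degree, which is exactly what is needed to run the central character argument. I would then apply \Cref{lemma:central_character_argument} with \(G = S_n\) and \(\U\) the set of even \(\el'\)-classes, each viewed as a one-element union. Its hypothesis is that the subalgebra of \(Z(\Q S_n)\) generated by \(\setbuild{s_\Dcl}{\Dcl \in \U}\) contains every even \(S_n\)-class sum, and this is precisely \Cref{thm:generation_theorem}, once one notes that every even class sum, and hence every product of even class sums, lies in \(\Q A_n \subseteq \Q S_n\), so that the subalgebra generated by the even \(\el'\)-class sums is the same whether formed inside \(Z(\Q A_n)\) or inside \(Z(\Q S_n)\). \Cref{lemma:central_character_argument} then yields \(\chi^\la(\Ecl) = \chi^\mu(\Ecl)\) for every even class \(\Ecl\), i.e.\ agreement of the two characters on all even classes.

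To conclude I would invoke the elementary fact recalled in \Cref{subsection:chars_of_S_n_and_A_n}, that the restriction of \(\chi^\la\) to \(A_n\) determines the labelling partition up to conjugation. Agreement of \(\chi^\la\) and \(\chi^\mu\) on all even classes is equivalent to the equality \(\chi^\la\downto{A_n} = \chi^\mu\downto{A_n}\) of class functions on \(A_n\) (both characters are constant on \(S_n\)-classes, hence take equal values on the two halves of a split class), and this equality forces \(\set{\la,\la'} = \set{\mu,\mu'}\). Since \(\la \neq \mu\), we obtain \(\mu = \la'\), as required.

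The substantive work is carried entirely by \Cref{thm:generation_theorem}; the only points needing care are the two pieces of bookkeeping above, namely extracting equal degree from the identity class so that \Cref{lemma:central_character_argument} applies, and checking that the generation statement transfers unchanged from \(Z(\Q A_n)\) to \(Z(\Q S_n)\). I expect this transfer, rather than any new representation-theoretic input, to be the main thing to verify; the final appeal to the \(S_n\)-to-\(A_n\) branching is standard.
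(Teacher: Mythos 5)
Your proof is correct and takes essentially the same route as the paper's: use \Cref{lemma:central_character_argument} with \(\U\) the set of even \(\el'\)-classes together with \Cref{thm:generation_theorem} to upgrade agreement on even \(\el'\)-classes to agreement on all even classes, then conclude via the fact that irreducible characters of \(S_n\) with equal restriction to \(A_n\) are precisely those labelled by conjugate partitions. The two bookkeeping points you single out --- equal degrees coming from the identity class, and the generated subalgebra being the same whether formed in \(Z(\Q A_n)\) or \(Z(\Q S_n)\) --- are left implicit in the paper, and you handle them correctly.
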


\begin{proof}
Suppose \(\chi^\la\) and \(\chi^\mu\) agree on all even \(\el'\)-classes of \(S_n\).
Letting \(\U\) be the set of even \(\el'\)-classes of \(S_n\)
and using \Cref{lemma:central_character_argument} and \Cref{thm:generation_theorem}, we have that \(\chi^\la\) and \(\chi^\mu\) agree on all even classes of \(S_n\).
Irreducible characters of \(S_n\) with equal restriction to \(A_n\) are precisely those labelled by conjugate partitions, so \(\la\) and \(\mu\) are conjugate.
The converse is elementary.
\end{proof}

Moving on to \(\cover{S}_n\) and \(\cover{A}_n\), we deduce that irreducible \spin characters are determined by their values on \(\el'\)-classes, with the exception of associate or conjugate pairs labelled by a partition with a part divisible by \(\el\).
The case of \(\el\) even is immediate, as noted in \Cref{section:characters_of_A_n_and_covers}.
The following proofs hold for odd \(\el \geq 5\).

\begin{proof}[Proof of \Cref{thm:maintheorem:double_cover_of_S_n}]
Suppose that \(\chi\) and \(\psi\) are irreducible \spin characters of \(\cover{S}_n\) which agree on the \(\el'\)-classes of \(\cover{S}_n\).
Let \(\U\) be the set of even \(\el'\)-classes of \(\cover{S}_n\).
By \Cref{thm:double_cover_of_S_n_generation_theorem}, the algebra generated by \(\setbuild{s_\Dcl}{\Dcl \in \U}\) contains all even \(\cover{S}_n\)-conjugacy class sums.
By hypothesis, \(\chi(\Dcl) = \psi(\Dcl)\) for all \(\Dcl \in \U\).
Thus by \Cref{lemma:central_character_argument}, \(\chi\) and \(\psi\) agree on all even \(\cover{S}_n\)-conjugacy classes; that is, \(\chi\downto{\cover{A}_n} = \psi\downto{\cover{A}_n}\).
Spin characters of \(\cover{S}_n\) with equal restriction to \(\cover{A}_n\) are precisely the associate pairs \((\negchar{\la}, \negchar{\la}^\ass)\), where \(\la\) is an odd partition with distinct parts, and these characters agree on \(\el'\)-classes if and only if \(\la\) has a part divisible by \(\el\).
\end{proof}

\newcommand{\mylen}{\widthof{\(\la\) even with no part divisible by \(\el\)}}

\begin{proof}[Proof of \Cref{thm:maintheorem:double_cover_of_A_n}]
Suppose that \(\chi\) and \(\psi\) are irreducible \spin characters of \(\cover{A}_n\) which agree on the \(\el'\)-classes of \(\cover{A}_n\).
Let \(\U\) be the set of even \(\el'\)-classes of \(\cover{S}_n\).
By \Cref{thm:double_cover_of_S_n_generation_theorem}, the algebra generated by \(\setbuild{s_\Dcl}{\Dcl \in \U}\) contains all even \(\cover{S}_n\)-conjugacy class sums.
By hypothesis, \(\chi(\Dcl) = \psi(\Dcl)\) for all \(\Dcl \in \U\).
Thus by \Cref{prop:distinguishing_up_to_conjugacy}, \(\chi\) and \(\psi\) are \(\cover{S}_n\)-conjugate.
The \(\cover{S}_n\)-conjugate characters of \(\cover{A}_n\) are precisely the pairs \((\negchar{\la}^\plus, \negchar{\la}^\minus)\), where \(\la\) is even, and these characters agree on \(\el'\)-classes if and only if \(\la\) has a part divisible by \(\el\).
\end{proof}

\section{Characters agreeing on \texorpdfstring{\(3'\)}{3'}-classes}
\label{section:3-indistinguishable}

In addition to the conjugate pairs identified in the introduction, two more infinite families of characters of \(A_n\) agreeing on \(3'\)-classes are given by the following theorem.
A computer search by Mark Wildon has shown that, other than the \(S_n\)-conjugate pairs labelled by partitions with a principal hook length divisible by \(3\), these are the only characters of \(A_n\) agreeing on \(3'\)-classes for \(n \leq 33\).

\begin{theorem}
\label{thm:3-indistinguishable}
Let \(n \geq 3\) and let \(\nu\) be a self-conjugate partition of \(n-3\).
\begin{enumerate}[(i)]
    \item
Suppose \(\nu\) is \(3\)-core.
Let \(\lambda\) be the partition of \(n\) obtained by adding a \(3\)-hook to the first row of \(\la\) (so that \(\la'\) is the partition obtained by adding a \(3\)-hook to the first column of \(\la\)),
and let \(\mu\) be the self-conjugate partition of \(n\) obtained by adding a principal \(3\)-hook to \(\nu\).
Then \(\chi^{\la}\downto{A_n}\), \(\chi^{\mu^\plus}\) and \(\chi^{\mu^\minus}\) agree on \(3'\)-classes
    \item
Suppose \(\nu\) has a unique \(3\)-hook (necessarily on the diagonal).
Let \(\la\), \(\la'\), \(\mu\) and \(\mu'\) be the four partitions of \(n\) which can be obtained from \(\nu\) by adding a \(3\)-hook.
Then \(\chi^\la\downto{A_n}\) and \(\chi^\mu\downto{A_n}\) agree on \(3'\)-classes.
\end{enumerate}
There is at most one \(\nu\) satisfying each of the hypotheses for each \(n\).
\end{theorem}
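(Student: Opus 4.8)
The plan is to reduce each agreement-on-$3'$-classes statement for $A_n$ to an identity between values of $S_n$-characters on \emph{even} $3'$-classes, and then to establish those identities using the structure of the relevant $3$-blocks together with the Murnaghan--Nakayama rule. A $3'$-class of $A_n$ is an even permutation none of whose cycle lengths is divisible by $3$. Since $\la$ (and $\mu$ in part~(ii)) is not self-conjugate, $\chi^\la\downto{A_n}$ is irreducible and takes the $S_n$-value $\chi^\la(\alpha)$ on every $A_n$-class of cycle type $\alpha$, split or not; likewise for $\chi^\mu\downto{A_n}$. For the self-conjugate $\mu$ of part~(i), $\chi^{\mu^\plus}$ and $\chi^{\mu^\minus}$ each take the value $\tfrac12\chi^\mu(\alpha)$ on every non-split class and differ only on the split class of cycle type $\diag(\mu)$. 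The key point for part~(i) is that, as $\mu$ is obtained from the $3$-core $\nu$ by adding a principal $3$-hook, its principal hook lengths are those of $\nu$ (all coprime to $3$, since $\nu$ is a $3$-core) together with a new part equal to $3$; hence $\diag(\mu)$ has a part divisible by $3$ and is \emph{not} a $3'$-class. Thus $\chi^{\mu^\plus}$ and $\chi^{\mu^\minus}$ agree on all $3'$-classes, both equalling $\tfrac12\chi^\mu$ there, and part~(i) reduces to $\chi^\mu(\alpha)=2\chi^\la(\alpha)$, while part~(ii) reduces to $\chi^\la(\alpha)=\chi^\mu(\alpha)$, for all even $3'$-partitions $\alpha$ of $n$.

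For part~(i) I would argue blockwise. The partitions $\la,\la',\mu$ all have $3$-core $\nu$ and $3$-weight $1$, so they are exactly the three ordinary characters of a weight-$1$ $3$-block of $S_n$, whose decomposition matrix is the bidiagonal $\left(\begin{smallmatrix}1&0\\1&1\\0&1\end{smallmatrix}\right)$; on $3'$-classes the middle character therefore equals the sum of the outer two. Conjugation $\chi^\rho\mapsto\chi^{\rho'}$ reverses the natural ordering of the block and so fixes the middle character, which must therefore be the unique self-conjugate partition $\mu$, with $\la,\la'$ interchanged. Hence $\chi^\mu=\chi^\la+\chi^{\la'}$ on $3'$-classes, and as $\chi^{\la'}(\alpha)=\operatorname{sgn}(\alpha)\chi^\la(\alpha)=\chi^\la(\alpha)$ for even $\alpha$, we obtain $\chi^\mu(\alpha)=2\chi^\la(\alpha)$, as required.

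Part~(ii) is where the real work lies, and I expect it to be the main obstacle. Now $\la,\la',\mu,\mu'$ have weight $2$ over the self-conjugate $3$-core of $\nu$, and the desired equality holds only on \emph{even} $3'$-classes: it must fail on some odd $3'$-class, since otherwise $\chi^\la=\chi^\mu$ by Wildon's theorem for $S_n$. So this is genuinely not a statement about equal rows of the $S_n$-decomposition matrix and cannot be read off a Brauer-character identity. Writing $\chi^\la(\alpha)=\langle s_\la,p_\alpha\rangle$, the target becomes the symmetric-function congruence $s_\la+s_{\la'}\equiv s_\mu+s_{\mu'}\pmod{(p_3,p_6,p_9,\dots)}$: modulo the ideal generated by the $p_{3k}$ one retains exactly the values on $3'$-classes, and adding the conjugate kills the odd classes. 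I would prove this by a direct Murnaghan--Nakayama analysis --- peeling off the parts of $\alpha$ (all coprime to $3$) and setting up a sign-preserving matching between the rim-hook tableaux for $\la$ and those for $\mu$ --- using the self-conjugacy of $\nu$ and the hypothesis that $\nu$ has a \emph{unique} $3$-hook, which forces the $3$-quotients of the four partitions into two conjugate pairs distinguished only by whether the added $3$-hook extends the existing runner or occupies a new one. Alternatively one can invoke the known decomposition data for weight-$2$ $3$-blocks together with the conjugation (Mullineux) symmetry; either way the bookkeeping across the four partitions is the crux.

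Finally, for the uniqueness clause, note that in part~(ii) the unique $3$-hook of a self-conjugate $\nu$ is automatically symmetric, hence diagonal, because conjugation permutes removable $3$-hooks. Both cases reduce to the claim that there is at most one self-conjugate $3$-core of each size: in part~(ii), removing the diagonal $3$-hook from $\nu$ leaves a self-conjugate $3$-core of size $n-6$, from which $\nu$ is recovered by the unique symmetric addition of a principal $3$-hook of the correct size. I would prove the at-most-one claim on the abacus: a $3$-core is determined by its three runner bead-counts up to a common shift, conjugation acts as an explicit involution on this data, and on the fixed locus the size is a strictly monotonic (quadratic) function, so no size occurs twice.
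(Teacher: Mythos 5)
Your part (ii) contains a genuine gap: you correctly reduce the claim to the identity \(\chi^\la + \chi^{\la'} = \chi^\mu + \chi^{\mu'}\) on all \(3'\)-classes of \(S_n\) (equivalently, your symmetric-function congruence modulo the ideal generated by \(p_3, p_6, \ldots\)), but you never prove it --- you offer two candidate strategies (a sign-preserving matching of rim-hook tableaux, or weight-two decomposition data plus Mullineux symmetry) and explicitly defer ``the bookkeeping'', which is the entire content of the statement. The missing ingredient is a single classical fact, \cite[Theorem~21.7]{gdjames1978reptheorysymgroups}: for \(\nu\) a partition of \(n-3\), the generalised character \(\sum (-1)^{i}\chi^{\rho}\), summed over the partitions \(\rho\) obtained from \(\nu\) by adding a \(3\)-hook, with \(i\) the leg length of the added hook, vanishes on every class not containing a \(3\)-cycle, hence on every \(3'\)-class. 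In case (ii) the four such \(\rho\) are exactly \(\la,\la',\mu,\mu'\); conjugate partitions receive equal signs (their added hooks have leg lengths \(i\) and \(2-i\)), and the two conjugate pairs must receive opposite signs, since otherwise evaluating at the identity would make a sum of character degrees vanish. This gives your target identity in one line; it is precisely how the paper argues, and it treats case (i) uniformly as well (there the three partitions are \(\la,\mu,\la'\) with signs \(+,-,+\)). It also shows that your remark that the statement ``cannot be read off a Brauer-character identity'' is misleading: the four-term identity on all \(3'\)-classes is exactly a linear relation among rows of the \(3\)-modular decomposition matrix; only the two-term identity \(\chi^\la=\chi^\mu\) on even classes alone fails to be one.

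Your part (i), by contrast, is correct and takes a route different from the paper's: you use the bidiagonal decomposition matrix of a weight-one (defect-one) \(3\)-block together with the conjugation symmetry to force the self-conjugate label \(\mu\) into the middle row, giving \(\chi^\mu = \chi^\la + \chi^{\la'}\) on \(3'\)-classes, where the paper deduces the same identity from Theorem~21.7. Your observation that \(\diag(\mu) = \diag(\nu)\sqcup(3)\) has a part divisible by \(3\), so that \(\chi^{\mu^\plus}\) and \(\chi^{\mu^\minus}\) agree on \(3'\)-classes, is needed to finish part (i) and is left implicit in the paper, so spelling it out is a genuine plus. The uniqueness clause you only sketch (at most one self-conjugate \(3\)-core of each size, via the abacus), but the paper leaves that to the reader as well; the one real deficiency in your proposal is part (ii).
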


\begin{proof}
\cite[Theorem~21.7]{gdjames1978reptheorysymgroups} states that if \(\nu\) is a partition of \(n-r\), then the generalised character \(\sum_{\la} (-1)^i \chi^\la\) vanishes on all classes except those containing an \(r\)-cycle, where the sum is over all partitions \(\la\) obtained from \(\nu\) by adding an \(r\)-hook, and \(i\) is the leg length of the added hook.
In case (i), then, we have equality \(\chi^\la + \chi^{\la'} = \chi^\mu\) on \(3'\)-classes;
in case (ii), we have equality \(\chi^\la + \chi^{\la'} = \chi^\mu + \chi^{\mu'}\) on \(3'\)-classes
(the fact that partitions with exactly zero or one \(3\)-hooks have exactly three or four addable \(3\)-hooks is clear from interpreting partitions on the \(3\)-abacus).
Restricting to \(A_n\) gives the result.
\end{proof}

\begin{remark}
If \(\la\) and \(\mu\) are partitions as in \Cref{thm:3-indistinguishable} and are of \(3\)-weight \(1\) or \(2\), then the characters of \(A_n\) labelled by \(\la\) and \(\mu\) vanish on the same conjugacy classes.
This can be shown using \cite[Theorem~3.9]{gllv2022SBCs} -- which extends \cite[Theorem~21.7]{gdjames1978reptheorysymgroups} to give the values of the considered generalised character on all classes -- whilst observing that at least one of \(\la\) and \(\mu\) have a unique \(3\)-hook.

A conjecture of Christine Bessenrodt, reported by Chris Bowman \cite[Conjecture~7]{bowman2022owr}, asserts that these are the only non-conjugate pairs of characters of \(A_n\)  that have equal vanishing sets (with three exceptions:
\begin{itemize}[leftmargin=50pt]
\item[\(n=15\):]
\((8, 3, 2, 1^2)\) and \((5, 4^2, 1^2)\);
\item[\(n=16\):]
\((8, 3^2, 1^2)\) and \((5^2, 4, 1^2)\);
\item[\(n=23\):]
\((8, 5, 4^2, 2)\) and \((7, 6, 4^2, 2)\);
\end{itemize} 
these pairs have equal vanishing sets but have \(3\)-weight \(5\)).
Indeed, the partitions \(\lambda\) and \(\mu\) described in \Cref{thm:3-indistinguishable} have equal \(3\)-cores and (up to conjugacy) have \(3\)-quotients \(((1), \emptyset, \emptyset)\) and \((\emptyset, (1), \emptyset)\) in case (i), and \(((1), (r^r), \emptyset)\) and \((\emptyset, (r+1,r^{r-1}), \emptyset)\) for some \(r \geq 1\) in case (ii); this description, restricting in case (ii) to \(r=1\), coincides with that of \cite[Conjecture~7]{bowman2022owr}.

Thus in all known cases a non-conjugate pair which have equal vanishing sets also agree on \(3'\)-classes (including the three exceptional pairs above, which fall into case (ii) in \Cref{thm:3-indistinguishable}; in general the pairs in case (ii) having \(3\)-weight exceeding \(2\) do not have equal vanishing sets).
This suggests an intriguing connection between \(3'\)-classes and vanishing sets for characters of \(A_n\).
\end{remark}

Below we record the characters of \(\cover{S}_n\) and \(\cover{A}_n\) which agree on \(3'\)-classes for \(n \leq 14\), identified using the character tables in \cite{hoffmanhumphreys1992projreps}.

\begin{proposition}
\label{prop:n<=14}
\begin{enumerate}[(i), beginthm]
    \item 
The following sets of \spin characters of \(\cover{S}_n\) agree on \(3'\)-classes:
\begin{itemize}[leftmargin=50pt]
    \item[\emph{\(n=5\):}]
\(\negchar{5}\) and \(\negchar{3,2}\);
    \item[\emph{\(n=6\):}]
\(\negchar{6}\) and \(\negchar{3,2,1}\);
    \item[\emph{\(n=7\):}]
\(\negchar{6,1}\) and \(\negchar{4,3}\);
    \item[\emph{\(n=8\):}]
\(\negchar{7,1}\) and \(\negchar{4,3,1}\); \(\negchar{6,2}\) and \(\negchar{5,3}\);
    \item[\emph{\(n=10\):}]
\(\negchar{8,2}\) and \(\negchar{5,3,2}\); \(\negchar{8,2}\) and \(\negchar{5,3,2}\);
    \item[\emph{\(n=11\):}]
\(\negchar{7,3,1}\) and \(\negchar{6,4,1}\);
    \item[\emph{\(n=12\):}]
\(\negchar{9,3}\) and \(\negchar{6,3,2,1}\);
    \item[\emph{\(n=13\):}]
\(\negchar{9,3,1}\) and \(\negchar{6,4,3}\); \(\negchar{8,3,2}\) and \(\negchar{6,5,2}\);
    \item[\emph{\(n=14\):}]
\(\negchar{9,3,2}\), \(\negchar{8,3,2,1}\) and \(\negchar{6,5,3}\).
\end{itemize}
The only other sets of \spin characters of \(\cover{S}_n\) agreeing on \(3'\)-classes for \(n \leq 14\) are
the associate pairs labelled by odd partitions with a part divisible by \(3\),
and those obtained from the above by replacing characters with their associates.
    \item
The following sets of \spin characters of \(\cover{A}_n\) agree on \(3'\)-classes.
\begin{itemize}[leftmargin=50pt]
    \item[\emph{\(n=12\):}]
\(\negchar{5,4,3}\downto{\cover{A}_n}\), \(\negchar{9,3}^\pm\) and \(\negchar{6,3,2,1}^\pm\);
    \item[\emph{\(n=13\):}]
\(\negchar{7,3,2,1}\downto{\cover{A}_n}\), \(\negchar{9,3,1}^\pm\) and \(\negchar{6,4,3}^\pm\).
\end{itemize}
The only other sets of \spin characters of \(\cover{A}_n\) agreeing on \(3'\)-classes for \({n \leq 14}\) are the conjugate pairs labelled by even partitions with a part divisible by \(3\),
and those obtained from the list for \(\cover{S}_n\) by restriction.
\end{enumerate}
\end{proposition}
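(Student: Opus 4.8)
The proof is a finite verification carried out directly from the spin character tables of \(\cover{S}_n\) and \(\cover{A}_n\) for \(n \leq 14\) recorded in \cite{hoffmanhumphreys1992projreps}; the plan is to reduce each comparison to a small explicit set of class values and then read off all coincidences. Two preliminary observations make this feasible. First, the identity class is a \(3'\)-class, so any two characters agreeing on \(3'\)-classes necessarily have equal degree; I therefore only ever compare characters of equal degree, which is already a strong filter. Second, by \cite[Theorem~8.7]{hoffmanhumphreys1992projreps} an irreducible \spin character \(\negchar{\la}\) vanishes on every class except those of cycle type with all parts odd and the class of cycle type \(\la\). Consequently, to test whether two \spin characters agree on \(3'\)-classes it suffices to compare their values on the \(3'\)-classes of all-odd cycle type (those with no part divisible by \(3\)) together with the split self-classes labelled by the two partitions.

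For \(\cover{S}_n\), the plan is: for each \(n \leq 14\), list the strict partitions of \(n\), group the corresponding \spin characters by degree, and within each degree record the vector of values on the \(3'\)-classes identified above. Any two characters whose value vectors coincide agree on \(3'\)-classes. Among such coincidences, the associate pairs \(\negchar{\la}, \negchar{\la}^\ass\) with \(\la\) odd and having a part divisible by \(3\) constitute the trivial family already identified (they differ only on the self-class of type \(\la\), which fails to be \(3'\)); setting these aside, the remaining coincidences, between characters labelled by distinct partitions, give the list in part~(i), and forming associates of these produces the stated variants. The assertion that no others occur is precisely the completeness of this exhaustive comparison.

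For \(\cover{A}_n\), I would pass down by restriction using \Cref{subsection:chars_of_double_covers}: \(\negchar{\la}\downto{\cover{A}_n}\) stays irreducible for \(\la\) odd and splits as \(\negchar{\la}^\pm\) for \(\la\) even. The \(3'\)-classes of \(\cover{A}_n\) are obtained from those of \(\cover{S}_n\) by the further splitting of the all-odd (and all-distinct) classes, so the comparison is again finite and small. The coincidences in part~(ii) then either descend from those for \(\cover{S}_n\) by restriction, or arise among the conjugate \(\pm\)-pairs; the trivial family here consists of the conjugate pairs \(\negchar{\la}^\pm\) with \(\la\) even and having a part divisible by \(3\), which is separated off as before. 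These must be checked directly, since \Cref{thm:maintheorem:double_cover_of_A_n} does not apply when \(\el = 3\).

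The main obstacle is not mathematical depth but careful bookkeeping: one must correctly enumerate which classes are simultaneously \(3'\)-classes and split classes (tracking the \(\uno\)/\(\zed\) labels for \(\cover{S}_n\) and the \(\pm\) labels for \(\cover{A}_n\)), and consistently identify each character with its associate or conjugate so that the trivial families are cleanly separated from the genuinely new coincidences. Given the sparsity forced by \cite[Theorem~8.7]{hoffmanhumphreys1992projreps} and the equal-degree filter, each individual comparison is immediate, and the content of the proposition lies in the exhaustiveness of the search over the finitely many \spin characters for each \(n \leq 14\).
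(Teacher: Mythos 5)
Your proposal takes essentially the same route as the paper: the paper offers no formal argument for this proposition, stating only that the lists were ``identified using the character tables in \cite{hoffmanhumphreys1992projreps}'', which is precisely the exhaustive finite verification you describe. Your organizing observations (the equal-degree filter from the identity class, and the vanishing of \spin characters off the all-odd-cycle-type classes and the self-class, via \cite[Theorem~8.7]{hoffmanhumphreys1992projreps}) are correct and simply make explicit how that table check is carried out.
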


\section*{Acknowledgements}

The author is grateful to Mark Wildon for the computations mentioned in \Cref{section:3-indistinguishable}, to Chris Bowman for sharing his work on vanishing sets of characters of \(A_n\), and to anonymous referees for their feedback on earlier versions of this paper.

\newcommand\ATlabel{}\newcommand\AT[2]{ATLAS}\newcommand\MDlabel{}\newcommand\MD[2]{McDW22}\newcommand\MSlabel{}\newcommand\MS[2]{McSO22}


\begin{thebibliography}{GLLV22}

\bibitem[\AT85]{ATLAS}
\ATlabel{\initials{J.H.} Conway, \initials{R.T.} Curtis, \initials{S.P.}
  Norton, \initials{R.A.} Parker and \initials{R.A.} Wilson}.
\newblock {\em {\(\mathbb{ATLAS}\)} of finite groups}.
\newblock Oxford University Press, Eynsham, 1985.
\newblock With computational assistance from J. G. Thackray.

\bibitem[BO97]{bessenrodt1997coveringgroups2blocks}
Christine Bessenrodt and J\o rn~B. Olsson.
\newblock The {$2$}-blocks of the covering groups of the symmetric groups.
\newblock {\em Advances in Mathematics}, 129(2):261--300, 1997.

\bibitem[Bow22]{bowman2022owr}
Chris Bowman.
\newblock Tensor products, modular representations, and character vanishing.
\newblock In {\em Character Theory and Categorification (Report No.~39/2022)}, volume 19(3) of {\em Oberwolfach Reports}. Mathematisches Forschunginstitut Oberwolfach, 2022.
\newblock Available at \href{https://doi.org/10.14760/OWR-2022-39}{doi.org/10.14760/OWR-2022-39}.

\bibitem[CP21]{chowpaulhus2021algorithm}
Timothy~Y. Chow and Jennifer Paulhus.
\newblock Algorithmically distinguishing irreducible characters of the
  symmetric group.
\newblock {\em Electronic Journal of Combinatorics}, 28:P2.5, 2021.

\bibitem[Fay18]{fayers2018irredchar2spinreps}
Matthew Fayers.
\newblock Irreducible projective representations of the symmetric group which
  remain irreducible in characteristic 2.
\newblock {\em Proceedings of the London Mathematical Society},
  116(4):878--928, 2018.

\bibitem[GLLV22]{gllv2022SBCs}
Eugenio Giannelli, Stacey Law, Jason Long, and Carolina Vallejo.
\newblock Sylow branching coefficients and a conjecture of {M}alle and
  {N}avarro.
\newblock {\em Bulletin of the London Mathematical Society}, 54(2):552--567,
  2022.

\bibitem[Hag71]{hagis1971partitions}
Peter Hagis, Jr.
\newblock Partitions with a restriction on the multiplicity of the summands.
\newblock {\em Transactions of the American Mathematical Society},
  155(2):375--384, 1971.

\bibitem[HH92]{hoffmanhumphreys1992projreps}
{\initials{P.N.}}~Hoffman and {\initials{J.F.}}~Humphreys.
\newblock {\em Projective Representations of the Symmetric Groups: Q-Functions
  and Shifted Tableaux}.
\newblock Oxford Mathematical Monographs. Clarendon Press, 1992.

\bibitem[HR18]{hardyramanujan1918asymptotic}
{\initials{G.H.}}~Hardy and S.~Ramanujan.
\newblock Asymptotic formulae in combinatory analysis.
\newblock {\em Proceedings of the London Mathematical Society}, s2-17:75--115,
  1918.

\bibitem[Jam78]{gdjames1978reptheorysymgroups}
{\initials{G.D.}}~James.
\newblock {\em The Representation Theory of the Symmetric Groups}, volume 682
  of {\em Lecture Notes in Mathematics}.
\newblock Springer-Verlag, 1978.

\bibitem[JK84]{jameskerber1984reptheory}
G.D. James and A.~Kerber.
\newblock {\em The Representation Theory of the Symmetric Group}.
\newblock Encyclopedia of Mathematics and its Applications. Cambridge
  University Press, 1984.

\bibitem[Kra66]{kramer1966symmetricgroup}
P.~Kramer.
\newblock Factorization of projection operators for the symmetric group.
\newblock {\em Zeitschrift für Naturforschung A}, 21(5):657--658, 1966.

\bibitem[Web16]{webb2016finitegroupreptheory}
Peter Webb.
\newblock {\em A Course in Finite Group Representation Theory}, volume 161 of
  {\em Cambridge Studies in Advanced Mathematics}.
\newblock Cambridge University Press, 2016.

\bibitem[Wil08]{wildon2008distinctrows}
Mark Wildon.
\newblock Character values and decomposition matrices of symmetric groups.
\newblock {\em Journal of Algebra}, 319(8):3382--3397, 2008.

\end{thebibliography}
\end{document}